\documentclass[11pt]{amsart}

\usepackage[a4paper,margin=1.15in]{geometry}
\usepackage{amssymb,amsthm,mathtools}
\usepackage[T1]{fontenc}
\usepackage{lmodern}
\usepackage{microtype}
\usepackage[colorlinks=true,linkcolor=blue,citecolor=blue,urlcolor=blue]{hyperref}
\usepackage{enumitem}

\newtheorem{theorem}{Theorem}[section]
\newtheorem{lemma}[theorem]{Lemma}
\newtheorem{corollary}[theorem]{Corollary}

\newtheorem{conjecture}[theorem]{Conjecture}
\theoremstyle{definition}
\newtheorem{definition}[theorem]{Definition}
\theoremstyle{remark}
\newtheorem{remark}[theorem]{Remark}

\numberwithin{equation}{section}

\newcommand{\Id}{\operatorname{Id}}
\newcommand{\Vol}{\operatorname{Vol}}
\newcommand{\rk}{\operatorname{rk}}
\newcommand{\codim}{\operatorname{codim}}
\newcommand{\Pic}{\operatorname{Pic}}
\newcommand{\BC}{\mathrm{BC}}
\newcommand{\cO}{\mathcal O}
\newcommand{\cE}{\mathcal E}
\newcommand{\cF}{\mathcal F}
\newcommand{\cQ}{\mathcal Q}
\newcommand{\cS}{\mathcal S}
\newcommand{\cA}{\mathcal A}
\newcommand{\cG}{\mathcal G}

\title[Mean curvature nonpositivity and rational connectedness]
{Mean curvature nonpositivity and rational connectedness}
\author[Chuanjing Zhang]{Chuanjing Zhang}
\address{School of Mathematics and Statistics, Ningbo University, Ningbo, 315211, P.R. China} \email{zhangchuanjing@nbu.edu.cn}
\author[Xi Zhang]{Xi Zhang}
\address{School of Mathematics and Statistics, Nanjing University of Science and Technology, Nanjing, 210094, P.R. China} \email{mathzx@njust.edu.cn}

\begin{document}

\begin{abstract}
Let $(X,\omega)$ be a compact Gauduchon manifold and let $E$ be a holomorphic vector bundle on $X$. We establish equivalent conditions for the existence of a Hermitian metric on $E$ with nonpositive mean curvature. For a non-uniruled compact K\"ahler manifold $X$ satisfying $\mu_{\omega_0,\max}(T_X)=0$ for some Gauduchon metric $\omega_0$, we show that the corresponding maximal destabilizing subsheaf of $T_X$ splits off holomorphically and is polystable of slope zero with respect to every Gauduchon metric. Its complement, if nonzero, is HN-negative with respect to every such metric. In particular, a compact K\"ahler manifold is not uniruled if and only if , for every Gauduchon metric $\omega$, its tangent bundle admits a Hermitian metric with nonpositive $\omega$-mean curvature. We also obtain an uniruledness criterion that includes the case where the maximal slope of $T_X$ is zero, and a Mumford-type characterization of rational connectedness. Under HN-nonnegativity of $T_X$ for one Gauduchon metric, rational connectedness, the vanishing of all holomorphic covariant tensors, and the condition $\kappa^+(X)=-\infty$ are equivalent.
\end{abstract}

\subjclass[2020]{53C07, 14J60, 32Q15.}
\thanks{The  authors are partially supported by NSF in China No.12431004, 12622103, 12371062 and 12141104. The research was supported by the Fundamental Research Funds for the Central Universities No.30926WZCT202601009 and the Stability Support Program for Basic Research No.WZJC202601001.}
\maketitle

\section{Introduction}

Let $(X,\omega)$ be a connected compact Hermitian manifold of dimension $n\geq 2$ and let $E$ be a holomorphic
vector bundle on $X$. Positivity of $E$ can be measured in two different ways. The algebro-geometric approach uses slopes of coherent subsheaves and quotient sheaves, whereas the differential-geometric approach uses the mean curvature of Hermitian metrics. On compact K\"ahler manifolds, the Donaldson-Uhlenbeck-Yau theorem \cite{Donaldson,UY1986} states that a holomorphic vector bundle is polystable if and only if it admits a Hermitian-Einstein metric. Li and Yau \cite{LY1987} extended this result to compact Gauduchon manifolds; see also \cite{Kobayashi,LT1995}.

 A Hermitian metric $\omega$ on $X$ is called Gauduchon if $\partial\bar\partial\omega^{n-1}=0$. It is known that every conformal class of Hermitian metrics on a connected compact complex manifold of dimension at least two contains a Gauduchon representative, unique up to a positive constant \cite{Gauduchon,Michelsohn}. In dimension one, every Hermitian metric is Gauduchon. Throughout the paper, manifolds are assumed to be connected. Suppose that $\omega$ is Gauduchon. For a torsion-free coherent sheaf $\cE$, its $\omega$-degree and $\omega$-slope are defined by
\begin{equation}
\deg_\omega(\cE):=\int_Xc_1(\det\cE , \hat{H})\wedge
\frac{\omega^{n-1}}{(n-1)!}, \qquad
\mu_\omega(\cE):=\frac{\deg_\omega(\cE)}{\rk\cE},
\end{equation}
where $\hat{H}$ is an arbitrary Hermitian metric on $\det\cE $. We denote by $\mu_{\omega,\max}(\cE)$ and
$\mu_{\omega,\min}(\cE)$ the maximal and minimal slopes of the successive quotients in the Harder-Narasimhan filtration of $E$, respectively. For the existence and uniqueness of this filtration in the Gauduchon setting, see \cite{Bruasse,Kobayashi,LT1995}. We say that $\cE$ is
HN-negative, HN-nonpositive, HN-positive, or HN-nonnegative with respect to $\omega$ if, respectively,
\begin{equation}
\mu_{\omega,\max}(\cE)<0,\quad \mu_{\omega,\max}(\cE)\leq0,\quad \mu_{\omega,\min}(\cE)>0,\quad \mu_{\omega,\min}(\cE)\geq0.
\end{equation}
If $H$ is a smooth Hermitian metric on $E$, let $F_H$ be the curvature of its Chern connection $D_{H}$ and set
\begin{equation}
 \Phi_{H,\omega}:=\sqrt{-1}\,\Lambda_\omega F_H \in C^\infty(X,\operatorname{End}(E)).
\end{equation}
$\Phi_{H,\omega}$ is called the $\omega$-mean curvature of $H$. At every point $x\in X$, the endomorphism $\Phi_{H,\omega}(x)$ is $H$-Hermitian. Write
\begin{equation}
 \lambda_{\max}(\Phi_{H,\omega})(x):=\max_{0\neq v\in E_x}\frac{H\bigl(\Phi_{H,\omega}(x)v,v\bigr)}{H(v,v)}
\end{equation}
for its largest eigenvalue. The metric $H$ has positive, nonnegative, negative, or nonpositive $\omega$-mean curvature according to the corresponding pointwise sign of $\Phi_{H,\omega}$.

According to \cite[Theorem 1.5]{LZZ2015}, for every $a\in\mathbb{R}$, we have
\begin{equation}\label{eq:strict-correspondence}
\mu_{\omega,\max}(E)<a \quad\Longleftrightarrow\quad E\ \text{admits }H\text{ with }\Phi_{H,\omega} <\frac{2\pi a}{\Vol_\omega(X)}\Id_E.
\end{equation}
The corresponding statement for $\mu_{\omega,\min}(E)$ follows by duality. As a consequence, we see that a compact K\"ahler manifold is projective and rationally connected if and only if its tangent bundle admits a Hermitian metric with positive mean curvature \cite[Theorem 1.7]{LZZ2015}.

The strict inequalities in \eqref{eq:strict-correspondence} cannot in general be replaced by non-strict inequalities. A semistable bundle $E$ of slope $a$ admits approximate Hermitian-Einstein metrics \cite{Jacob,NZ2018}. However, it need not admit a Hermitian metric $H$ satisfying $\Phi_{H,\omega}\leq \frac{2\pi a}{\Vol_\omega(X)}\Id_E$. Indeed, since $\mu_\omega(E)=a$, taking the trace and integrating shows that this inequality forces equality everywhere. Such a metric would therefore be Hermitian-Einstein and $E$ would be polystable. A nonsplit extension of stable bundles of slope $a$ is semistable but not polystable, so it admits no such metric. The following theorem gives necessary and sufficient conditions for the existence of a Hermitian metric satisfying this upper bound.

\begin{theorem}\label{thm:metric}
Let $(X,\omega)$ be a compact Gauduchon manifold, let $E$ be a holomorphic vector bundle, and let $a\in\mathbb R$. Then the following conditions are equivalent:
\begin{enumerate}[label=\textup{(\roman*)},ref=\roman*]
\item\label{(i)} There exists a smooth Hermitian metric $H_0$ on $E$ such that
\begin{equation}\label{eq:integral-condition}
\int_X\lambda_{\max}(\Phi_{H_0,\omega})\,dV_\omega\leq2\pi a, \qquad dV_\omega:=\frac{\omega^n}{n!}.
\end{equation}
\item\label{(ii)} There is a holomorphic decomposition
\begin{equation}
E=E_a\oplus E_{<a},
\end{equation}
where $E_a$ is either zero or $\omega$-polystable of slope $a$, and $E_{<a}$ is either zero or satisfies $\mu_{\omega,\max}(E_{<a})<a$. If $\mu_{\omega,\max}(E)=a$, then $E_{a}=\cS$, where $\cS$ is the maximal destabilizing subsheaf.
\item\label{(iii)} There exists a smooth Hermitian metric $H$ on $E$ such that
\begin{equation}\label{eq:pointwise-boundary}
\Phi_{H,\omega}\leq \frac{2\pi a}{\Vol_\omega(X)}\Id_E.
\end{equation}
\end{enumerate}
\end{theorem}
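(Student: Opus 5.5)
I will prove the cycle (iii)$\Rightarrow$(i)$\Rightarrow$(ii)$\Rightarrow$(iii).

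\emph{(iii)$\Rightarrow$(i).} If $\Phi_{H,\omega}\le \frac{2\pi a}{\Vol_\omega(X)}\Id_E$, then $\lambda_{\max}(\Phi_{H,\omega})\le 2\pi a/\Vol_\omega(X)$ pointwise. Integrating against $dV_\omega$ gives \eqref{eq:integral-condition}.

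\emph{(ii)$\Rightarrow$(iii).} I construct $H$ blockwise. By the Li--Yau theorem, the polystable summand $E_a$ carries a Hermitian--Einstein metric $H_a$. Its Einstein constant is fixed by integrating the trace, giving $\Phi_{H_a,\omega}=\frac{2\pi a}{\Vol_\omega(X)}\Id_{E_a}$. For $E_{<a}$, since $\mu_{\omega,\max}(E_{<a})<a$, the correspondence \eqref{eq:strict-correspondence} (\cite[Theorem 1.5]{LZZ2015}) gives $H_{<a}$ with $\Phi_{H_{<a},\omega}<\frac{2\pi a}{\Vol_\omega(X)}\Id$. The orthogonal direct sum $H=H_a\oplus H_{<a}$ has block-diagonal Chern curvature, so \eqref{eq:pointwise-boundary} holds.

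\emph{(i)$\Rightarrow$(ii).} This is the main step.

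First I show $\mu_{\omega,\max}(E)\le a$. Let $\cF\subset E$ be any saturated subsheaf, a subbundle off a set of codimension $\ge 2$. The Chern--Weil formula for subbundles gives
\[
\deg_\omega\cF=\frac1{2\pi}\int_X \operatorname{tr}(\pi\Phi_{H_0,\omega}\pi)\,dV_\omega-\frac1{2\pi}\|\bar\partial\pi\|^2_{L^2}.
\]
Here $\pi$ is the $H_0$-orthogonal projection, and the formula remains valid for sheaves because $\pi\in L^2_1$ (Uhlenbeck--Yau) and $\omega$ is Gauduchon. Since $\operatorname{tr}(\pi\Phi\pi)\le \rk\cF\cdot\lambda_{\max}(\Phi)$, we get $\mu_\omega(\cF)\le a-\frac{1}{2\pi\rk\cF}\|\bar\partial\pi\|^2$. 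In particular $\mu_{\omega,\max}(E)\le a$.

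If $\mu_{\omega,\max}(E)<a$, I take $E_a=0$ and $E_{<a}=E$.

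Otherwise, let $\cS$ be the maximal destabilizing subsheaf. Applying the inequality to $\cF=\cS$ forces $\bar\partial\pi=0$, so $\pi$ is a holomorphic $H_0$-orthogonal projection. Then $\cS$ is a holomorphic subbundle and $E=\cS\oplus\cS^{\perp}$ holomorphically. Moreover, equality in $\operatorname{tr}(\pi\Phi\pi)\le\rk\cS\,\lambda_{\max}$ holds almost everywhere. Since $\pi$ is parallel, the restriction of $H_0$ to $\cS$ has mean curvature $\pi\Phi\pi$, all of whose eigenvalues equal $\lambda_{\max}$. Hence $\Phi_{H_0|_\cS}=\lambda_{\max}\Id_\cS$ pointwise, with integral of the function at most $2\pi a$.

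A conformal change by $e^{f}$ with $\Delta_\omega f$ chosen to absorb $\lambda_{\max}-\frac{2\pi a}{\Vol}$ is solvable on a Gauduchon manifold once the integral condition holds, using the Gauduchon condition to make the Laplacian's cokernel the constants. This makes $H_0|_\cS$ Hermitian--Einstein of slope $a$, so $\cS$ is polystable; the existence of an HE metric also gives polystability directly.

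It remains to handle the complement $Q=\cS^\perp\cong E/\cS$. Its restricted metric satisfies $\Phi_{H_0|_Q}=\pi^\perp\Phi\pi^\perp$ because the second fundamental form vanishes, so $\lambda_{\max}(\Phi_Q)\le\lambda_{\max}(\Phi)$ and (i) holds for $Q$. Any subsheaf $\cF\subset Q$ of slope $a$ would then also be a direct summand. Its preimage would be a subsheaf of $E$ of slope $a$ strictly containing $\cS$, contradicting the maximality of $\cS$. Therefore $\mu_{\omega,\max}(Q)<a$, and (ii) holds with $E_a=\cS$ and $E_{<a}=Q$.

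The delicate point is justifying the Chern--Weil equality with the $\|\bar\partial\pi\|^2$ term for singular saturated subsheaves in the Gauduchon (non-Kähler) setting. There the integration by parts uses $\partial\bar\partial\omega^{n-1}=0$ and the $L^2_1$ regularity of weakly holomorphic projections; I would cite the Uhlenbeck--Yau/Li--Yau framework for this.
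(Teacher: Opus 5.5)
Your proof is correct, and its skeleton is the paper's: the same cycle of implications, the same Li--Yau plus strict-correspondence construction for (ii)$\Rightarrow$(iii), and the same Chern--Weil/second-fundamental-form argument (Lemma~\ref{lem:GC}, Corollary~\ref{cor:splitting}) giving $\mu_{\omega,\max}(E)\le a$ and the holomorphic orthogonal splitting $E=\cS\oplus\cS^{\perp}$.

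You genuinely differ in how you prove that $\cS$ is polystable. The paper proves a separate rigidity lemma (Lemma~\ref{lem:rigid}) by induction on rank. It splits off the first Jordan--H\"older term with Corollary~\ref{cor:splitting} and checks that the integral hypothesis passes to the complement. The identity $\Phi_{H_0|_{\cS},\omega}=\lambda_{\max}(\Phi_{H_0,\omega})\Id_{\cS}$ is recorded only afterwards, as a by-product.

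You make that identity do the work instead. The restricted metric is weakly Hermitian--Einstein, so a conformal change (a linear equation for the Gauduchon Laplacian) makes it Hermitian--Einstein. The easy direction of the Kobayashi--Hitchin correspondence on Gauduchon manifolds (L\"ubke--Teleman) then gives polystability.

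Your route avoids Jordan--H\"older filtrations and induction, and it produces an explicit Hermitian--Einstein metric on $\cS$ conformal to $H_0|_{\cS}$. The paper's route uses no PDE for this step and no external theorem beyond Corollary~\ref{cor:splitting}; in effect it inlines the proof of "Hermitian--Einstein $\Rightarrow$ polystable" under the weaker integral hypothesis.

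Two small points deserve to be said explicitly. First, $\lambda_{\max}$ is a priori only continuous, but it is smooth here because it equals $\frac1s\operatorname{tr}\Phi_{H_0|_{\cS},\omega}$; the a.e.\ equality upgrades to everywhere by continuity. Second, its integral is exactly $2\pi a$, not merely at most $2\pi a$, since $s\int_X\lambda_{\max}\,dV_\omega=2\pi\deg_\omega\cS=2\pi sa$. This is what gives the right-hand side of your conformal equation mean zero.

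Your maximality argument for $\mu_{\omega,\max}(\cS^{\perp})<a$ simply reproves the standard Harder--Narasimhan property that the paper quotes.
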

Taking $a=0$ gives a criterion for the existence of a Hermitian metric with nonpositive mean curvature.

We next turn to the tangent bundle. Recall that a compact K\"ahler manifold $X$ is called uniruled if it is dominated by a family of rational curves. Consider the Mumford-type vanishing condition
\begin{equation}\label{eq:tensor-vanishing}
 H^0\bigl(X,(\Omega_X^1)^{\otimes m}\bigr)=0 \qquad\text{for every }m>0.
\end{equation}
Rationally connected projective manifolds satisfy \eqref{eq:tensor-vanishing}; the converse is usually referred to as the Mumford conjecture; see \cite[Remark 1.2]{CDP2015} and \cite{LP2017}. Lazi\'c and Peternell proposed the following conjecture:
\begin{conjecture}\cite[Conjecture 1.2]{LP2017}\label{cLP}
Let $X$ be a compact projective manifold satisfying \eqref{eq:tensor-vanishing}. Then $X$ is uniruled.
\end{conjecture}
Moreover, they showed that it implies the Mumford conjecture. Recently, Ou \cite[Theorem 1.1]{Ou2025} proved that a compact K\"ahler manifold is uniruled if and only if its canonical line bundle is not pseudoeffective. Cao and P\u{a}un \cite[Corollaries 5.2 and 5.3]{CaoPaun2025} established the corresponding positivity results for quotients of the cotangent bundle and for tangent slopes with respect to movable classes. Together with Lamari's criterion \cite{Lamari}, these results imply that $X$ is uniruled if and only if there exists a Gauduchon metric $\omega$ such that $\mu_{\omega,\max}(T_X)>0$. We now consider the case $\mu_{\omega,\max}(T_X)=0$. By using the holomorphic splitting theorem of Loray-Pereira-Touzet \cite[Theorem 5.2]{LPT2018}, we obtain the following theorem.

\begin{theorem}\label{thm2}
Let $X$ be a compact K\"ahler manifold. Assume that $X$ is not uniruled and that there exists a Gauduchon metric $\omega_0$ on $X$ such that
\begin{equation}\label{HNzero}
\mu_{\omega_0,\max}(T_X)=0.
\end{equation}
Then there exist integrable holomorphic subbundles $\cS,\cQ\subset T_X$, with $\cS\neq0$, such that
\begin{equation}\label{eq:split}
 T_X=\cS\oplus\cQ,
\end{equation}
where $\cS$ is the maximal destabilizing subsheaf for $\omega_0$, $c_1^{\BC}(\cS)=0$, and $\cS$ is $\omega$-polystable of slope zero for every Gauduchon metric $\omega$. If $\cQ\neq0$, then $\mu_{\omega,\max}(\cQ)<0$ for every Gauduchon metric $\omega$. Furthermore, a compact K\"ahler manifold $X$ is not uniruled if and only if
for every Gauduchon metric $\omega$ there exists a smooth Hermitian metric $H_\omega$ on $T_X$ such that
\begin{equation}
\Phi_{H_\omega,\omega}\leq0.
\end{equation}
\end{theorem}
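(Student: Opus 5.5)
Let $\cS$ be the maximal destabilizing subsheaf of $T_X$ with respect to $\omega_0$. Since $X$ is not uniruled, the criterion recalled above (Ou together with Cao--P\u{a}un and Lamari) gives $\mu_{\omega,\max}(T_X)\le 0$ for every Gauduchon metric $\omega$. In particular $\mu_{\omega_0}(\cS)=0$ and $\cS$ is $\omega_0$-semistable.

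The first step is to show $c_1^{\BC}(\cS)=0$. Choose any Gauduchon $\omega$; then $\deg_\omega(\cS)\le \rk\cS\cdot\mu_{\omega,\max}(T_X)\le 0$. The class $c_1^{\BC}(\det\cS)$ pairs nonpositively with every class $[\omega^{n-1}]\in H^{n-1,n-1}_{A}$ coming from a Gauduchon metric. These classes form an open cone, so the pairing, being linear and $\le 0$ on an open cone containing the class of $\omega_0^{n-1}$, where it vanishes, must be identically zero. By duality between Bott--Chern $(1,1)$ and Aeppli $(n-1,n-1)$ cohomology, $c_1^{\BC}(\cS)=0$.

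Next we obtain the splitting. Since $\det\cS^{**}\subset\wedge^{r}T_X$ has trivial Chern class and $X$ is not uniruled, $\cS$ is a foliation: the O'Neil tensor $\wedge^2\cS\to T_X/\cS$ vanishes because its image would be destabilizing. Indeed, all HN-pieces of $T_X/\cS$ have $\omega_0$-slope $<0$, while $\wedge^2\cS$ is semistable of slope $0$. The sheaf $\cS$ is then a foliation with numerically trivial canonical class $K_\cS$ on a compact Kähler manifold with $K_X$ pseudoeffective. Here I would invoke Loray--Pereira--Touzet \cite[Theorem 5.2]{LPT2018}: $\cS$ is a regular foliation (a subbundle), and there is an integrable complementary subbundle $\cQ$ with $T_X=\cS\oplus\cQ$. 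Checking that their hypotheses (semistability, $c_1=0$, Kähler setting) are met is the main obstacle. If they are phrased for polarizations, I would first prove polystability of $\cS$ for $\omega_0$ and use that to meet them. Polystability should follow because $\cS$ is a direct summand of $\Omega_X^{1*}$-type tensors with a nonzero section of $\det\cS^*\otimes K$, which is flat.

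Once the splitting holds, the remaining claims follow. Apply Theorem~\ref{thm:metric} with $a=0$. For $\omega_0$, write $\cQ$'s maximal slope: $\cQ\cong T_X/\cS$ has $\mu_{\omega_0,\max}<0$. For arbitrary $\omega$, $\cS$ is a subbundle with $\deg_\omega\cS=0$ and $\mu_{\omega,\max}(T_X)\le0$, so $\cS$ is $\omega$-semistable. Polystability for every $\omega$ follows from flatness. Indeed, $\cS$ is polystable for $\omega_0$, so it carries a Hermitian--Einstein metric with $\Lambda F=0$ and $c_1=0$. By the Kähler ambient and a Bogomolov/Lübke-type argument it is then projectively flat, and in fact admits a flat unitary structure, which makes it polystable for all Gauduchon metrics.

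For $\cQ$, suppose $\mu_{\omega,\max}(\cQ)=0$ for some $\omega$. Its destabilizing piece $\cS'$ would then satisfy $c_1^{\BC}=0$ by the cone argument above, and hence $\mu_{\omega_0}(\cS')=0$. This contradicts $\mu_{\omega_0,\max}(\cQ)<0$.

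For the final equivalence, there are two cases when $X$ is not uniruled. If $\mu_{\omega,\max}(T_X)<0$, use \eqref{eq:strict-correspondence}. If $\mu_{\omega,\max}(T_X)=0$, use the decomposition just proved together with Theorem~\ref{thm:metric}(ii)$\Rightarrow$(iii). Conversely, if such $H_\omega$ exists for every $\omega$, then Theorem~\ref{thm:metric} gives $\mu_{\omega,\max}(T_X)\le0$ for all $\omega$, so $X$ is not uniruled by the criterion.
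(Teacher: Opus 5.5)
\textbf{Gap: polystability of $\cS$ is never actually proved.} Everything else in your plan is sound. Your first step is correct and differs from the paper: it replaces Lamari's criterion plus the vanishing of a zero-mass positive current (Lemma~\ref{lem:variation}) by a finite-dimensional argument. $\deg_\omega(\cS)$ is the Bott--Chern/Aeppli pairing of $c_1^{\BC}(\det\cS)$ with $[\omega^{n-1}]_A$, and these classes form an open cone (by Michelsohn's root and openness of positivity). A linear functional that is $\le 0$ on an open cone and vanishes at an interior point is zero, and nondegeneracy of the pairing then gives $c_1^{\BC}(\cS)=0$. Lemma~\ref{lem:LPT} needs only $K_X$ pseudoeffective and $c_1(\cS)=0$ in $H^2(X,\mathbb R)$, both of which you have. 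So the hedging about polarizations and the O'Neill-tensor step are unnecessary. Your treatment of $\cQ$ and of the converse direction of the final equivalence is fine.

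You never show that $\cS$ is $\omega_0$-polystable; the remark about a flat section of $\det\cS^*\otimes K$ is not an argument. The transfer to other Gauduchon metrics via flatness is false. A Hermitian--Einstein metric with $\Lambda F=0$ and $c_1=0$ is (projectively) flat only in the equality case of the L\"ubke inequality, i.e.\ when the $c_2$-term also vanishes, and in any case $\omega_0$ is merely Gauduchon. Concretely, for a K3 surface $X$ one has $\mu_{\omega_0,\max}(T_X)=0$ and $\cS=T_X$ with $c_2=24$, so $\cS$ admits no flat structure. The final equivalence inherits this gap, since it invokes Theorem~\ref{thm:metric}(ii)$\Rightarrow$(iii) for an arbitrary $\omega$ and hence needs $\omega$-polystability.

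\textbf{The fix is to iterate your own tools.}
Fix a Gauduchon $\omega$; you already know $\cS$ is $\omega$-semistable of slope zero. Let $\cA$ be the first term of an $\omega$-Jordan--H\"older filtration of $\cS$. Since $\cS$ is a direct summand of $T_X$, $\cA$ is saturated in $T_X$. Moreover $\deg_\omega\cA=0$ and $\deg_{\omega'}\cA\le 0$ for every Gauduchon $\omega'$, so your cone argument gives $c_1^{\BC}(\cA)=0$. Lemma~\ref{lem:LPT} then yields a holomorphic retraction $T_X\to\cA$. Its restriction splits $\cS=\cA\oplus\cS'$, with $\cS'$ saturated in $T_X$ and $c_1^{\BC}(\cS')=0$. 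Induction on the rank gives $\omega$-polystability, with no flatness involved. This is exactly the paper's Lemma~\ref{lem:universal-polystability}.
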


The proof first shows that the subsheaf $\cS$ of slope zero has vanishing first Bott-Chern class. The splitting theorem then makes it a holomorphic subbundle. Applying the same argument to the term of a Jordan-H\"older filtration gives polystability for every Gauduchon metric. This also shows that $\cS$ is independent of the chosen Gauduchon metric.

As an application, we obtain the following uniruledness criterion.

\begin{theorem}\label{thm:uniruled}
Let $X$ be a compact K\"ahler manifold satisfying \eqref{eq:tensor-vanishing}. Assume that there exists a Gauduchon metric $\omega_1$ on $X$ such that
\begin{equation}\label{eq:uniruled-hypotheses}
 \mu_{\omega_1,\max}(T_X)\geq0.
\end{equation}
Then $X$ is uniruled.
\end{theorem}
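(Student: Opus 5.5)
We argue by contradiction and suppose that $X$ is not uniruled. Since $X$ is not uniruled, the criterion recalled before Theorem \ref{thm2} (Ou's theorem, the Cao--P\u{a}un slope results and Lamari's criterion) gives $\mu_{\omega,\max}(T_X)\leq 0$ for every Gauduchon metric $\omega$. In particular, hypothesis \eqref{eq:uniruled-hypotheses} forces $\mu_{\omega_1,\max}(T_X)=0$. We are therefore in the situation of Theorem \ref{thm2} with $\omega_0=\omega_1$. That theorem yields a holomorphic splitting $T_X=\cS\oplus\cQ$ with $\cS\neq 0$, $c_1^{\BC}(\cS)=0$, and $\cS$ polystable of slope zero with respect to every Gauduchon metric.

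The plan is to produce a nonzero holomorphic covariant tensor from $\cS$, which contradicts \eqref{eq:tensor-vanishing}. Let $r=\rk\cS$. Because $T_X=\cS\oplus\cQ$ holomorphically, the dual splitting $\Omega^1_X=\cS^*\oplus\cQ^*$ makes $\det\cS^*=\Lambda^r\cS^*$ a direct summand of $\Lambda^r\Omega^1_X$. The latter is in turn a direct summand of $(\Omega^1_X)^{\otimes r}$ via antisymmetrization. Now $\det\cS^*$ is a holomorphic line bundle with $c_1^{\BC}=0$. On a compact K\"ahler manifold this means it is a flat unitary line bundle, i.e.\ an element of $\Pic^0$-type torsion-free part twisted by a character.

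The key step is to show that some power $(\det\cS^*)^{\otimes k}$ is holomorphically trivial. Given that, we would have
\[
0\neq H^0\bigl(X,(\det\cS^*)^{\otimes k}\bigr)\subset H^0\bigl(X,(\Omega^1_X)^{\otimes rk}\bigr),
\]
which contradicts \eqref{eq:tensor-vanishing}. To obtain torsion, I would use \eqref{eq:tensor-vanishing} with $m=1$: $H^0(X,\Omega^1_X)=0$, hence $h^{1,0}=0$, so $b_1(X)=0$ by Hodge symmetry on a K\"ahler manifold. Thus $\Pic^0(X)=H^1(X,\cO_X)/H^1(X,\mathbb Z)$ is trivial. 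Consequently every line bundle with vanishing real first Chern class is determined by a torsion class in $H^2(X,\mathbb Z)$, because $H^1(X,\mathbb C^*)$ of unitary characters is finite when $b_1=0$. It follows that $\det\cS^*$ is torsion in $\Pic(X)$, so $k$ exists.

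The main obstacle I expect is the passage from $c_1^{\BC}(\cS)=0$ to the statement that $\det\cS^*$ has torsion first Chern class in $\Pic(X)$. This requires the injectivity of the map from Bott--Chern to de Rham cohomology in degree $(1,1)$ on K\"ahler manifolds (the $\partial\bar\partial$-lemma), together with the exponential sequence. If one prefers to avoid this, an alternative is to use the polystability of $\cS$ with $c_1=0$ directly. This gives a Hermitian--Einstein metric with zero mean curvature on $\cS$, hence a flat metric on $\det\cS$ in the K\"ahler case, and one again concludes via $b_1=0$ that the unitary character of $\pi_1(X)$ is finite, so that a power of $\det\cS^*$ is trivial.
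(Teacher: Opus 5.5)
Your argument is correct and is essentially the paper's proof: you use Lemma \ref{lem:generic} to reduce to $\mu_{\omega_1,\max}(T_X)=0$, apply Theorem \ref{thm2} to get $T_X=\cS\oplus\cQ$ with $c_1^{\BC}(\cS)=0$, embed $(\det\cS)^*$ as a summand of $\Omega_X^r\subset(\Omega_X^1)^{\otimes r}$, and use $H^{0,1}(X)=\overline{H^{1,0}(X)}=0$, hence $\Pic^0(X)=0$, to make a power of $(\det\cS)^*$ trivial. One small correction: passing from $c_1^{\BC}(\cS)=0$ to $c_1(\cS)=0$ in $H^2(X,\mathbb R)$ needs only the natural map from Bott--Chern to de Rham cohomology (a $\partial\bar\partial$-exact form is $d$-exact), not its injectivity or the $\partial\bar\partial$-lemma, so the obstacle you anticipate does not arise.
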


We propose the following conjecture:
\begin{conjecture}\label{c1}
Let $X$ be a compact K\"ahler manifold. If the tangent bundle $T_{X}$ is HN-negative for any Gauduchon metric on $X$, then \eqref{eq:tensor-vanishing} does not hold.
\end{conjecture}
By Theorem \ref{thm:uniruled}, Conjecture~\ref{c1} implies Conjecture \ref{cLP}. The strictly HN-negative case remains open in this argument.

The invariant below was introduced by Campana \cite{Campana}.

\begin{definition}\label{def:kappa-plus}
For a compact complex manifold $X$, set
\begin{equation}
 \kappa^+(X):=\max\bigl\{\kappa(X,\det\cF)\,\big|\,
 p\geq1,\ 0\neq\cF\subset\Omega_X^p\text{ a coherent subsheaf}\bigr\},
\end{equation}
where $\det\cF=(\Lambda^{\rk\cF}\cF)^{**}$ and the maximum of the empty set is $-\infty$.
\end{definition}

In \cite{DPS1996}, Demailly-Peternell-Schneider explicitly formulated the following conjecture after replacing Hermitian
semipositivity of $-K_X$ by nefness. The question was later restated by Campana-Peternell \cite[Problem 6.12(2)]{CP1999}.

\begin{conjecture}[Demailly-Peternell-Schneider]\label{conj:DPS}
Let $X$ be a compact K\"ahler manifold such that $-K_X$ is nef and $\kappa^+(X)=-\infty$. Then $X$ is rationally connected.
\end{conjecture}

We also obtain the following Mumford-type criterion. The important point is that $T_X$ is assumed to be HN-nonnegative with respect to only one Gauduchon metric.

\begin{theorem}\label{thm:mumford}
Let $X$ be a compact K\"ahler manifold. Assume that there exists a Gauduchon metric $\omega_0$ such that
\begin{equation}\label{eq:HN-nonnegative}
\mu_{\omega_0,\min}(T_X)\geq0.
\end{equation}
Then the following conditions are equivalent:
\begin{enumerate}[font=\upshape]
\item\label{(1)} $X$ is projective and rationally connected;
\item\label{(2)} $H^0\bigl(X,(\Omega_X^1)^{\otimes m}\bigr)=0$ for every $m\geq1$;
\item\label{(3)} $\kappa^+(X)=-\infty$.
\end{enumerate}
\end{theorem}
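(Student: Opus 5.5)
We prove the cycle of implications (1)$\Rightarrow$(3)$\Rightarrow$(2)$\Rightarrow$(1). Only the last step uses the hypothesis \eqref{eq:HN-nonnegative}.

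For (1)$\Rightarrow$(3), let $X$ be projective and rationally connected, and take a coherent subsheaf $0\neq\cF\subset\Omega_X^p$ of rank $r$. Then $\det\cF\subset(\Lambda^r\Omega_X^p)^{**}$, so a nonzero section of $(\det\cF)^{\otimes k}$ gives a nonzero section of some tensor power of $\Omega_X^1$. Such sections vanish on a rationally connected manifold. To see this, restrict to a very free rational curve $f\colon\mathbb P^1\to X$ through a general point. There $f^*\Omega_X^1$ is a sum of line bundles $\cO(-a_i)$ with $a_i>0$, so every tensor power of it has no sections. Hence $\kappa(X,\det\cF)=-\infty$ for every such $\cF$, which gives $\kappa^+(X)=-\infty$.

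For (3)$\Rightarrow$(2), suppose $0\neq s\in H^0(X,(\Omega_X^1)^{\otimes m})$. We want a subsheaf of some $\Omega_X^p$ whose determinant has nonnegative Kodaira dimension. I would use the standard argument of Campana and Demailly--Peternell--Schneider, via Schur functors or the generic semipositivity route. Decomposing $(\Omega^1_X)^{\otimes m}$ into Schur functors gives $s$ a nonzero component in some $\mathbb S_\lambda\Omega^1_X$. The saturation of the subsheaf generated by $s$ is then a rank-one subsheaf of a tensor bundle with a section. I would cite the known equivalence that $\kappa^+(X)=-\infty$ implies the vanishing of all covariant tensors; this is the form in which \cite{Campana} and \cite{CDP2015} state it. Alternatively I would avoid this implication altogether and prove (3)$\Rightarrow$(1) directly, using the same route as in the next paragraph.

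For (2)$\Rightarrow$(1), I use \eqref{eq:HN-nonnegative}. Since $\mu_{\omega_0,\max}(T_X)\geq\mu_{\omega_0,\min}(T_X)\geq0$, Theorem \ref{thm:uniruled} shows that $X$ is uniruled. Next I pass to the MRC fibration $X\dashrightarrow Z$, which exists for compact K\"ahler uniruled manifolds. If $\dim Z>0$, then $Z$ is not uniruled, and the relative tangent sheaf $T_{X/Z}$ is a saturated subsheaf of $T_X$. The quotient $T_X/T_{X/Z}$, generically $f^*T_Z$, has $\omega_0$-slope $\geq\mu_{\omega_0,\min}(T_X)\geq0$ by HN-nonnegativity, and this inequality also holds for all of its quotients. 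I would transfer this nonnegativity to $Z$ by pushing the class $\omega_0^{n-1}$ forward to a movable class on $Z$. Combined with $Z$ being non-uniruled, which makes $K_Z$ pseudoeffective by Ou's theorem, this should force $K_Z\equiv0$ numerically against that movable class. The Beauville--Bogomolov-type decomposition, or Theorem \ref{thm2} applied to $Z$, then produces a nonzero holomorphic form on a finite étale cover of $Z$. Through this one gets nonzero symmetric or tensor differentials, namely powers of the trivial canonical bundle or holomorphic $1$-forms pulled back to $X$, possibly after a finite cover. Using the covering-invariance of tensor vanishing, this contradicts \eqref{eq:tensor-vanishing}. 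Hence $Z$ is a point, $X$ is rationally connected, and a rationally connected compact K\"ahler manifold has $h^{2,0}=0$, so it is projective.

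The main obstacle is this last step: controlling the base $Z$ using only HN-nonnegativity for the single metric $\omega_0$, when the MRC map is merely meromorphic. I would try to handle it with a resolution of indeterminacies and the Cao--P\u{a}un slope results for movable classes.
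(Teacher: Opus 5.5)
Your step (1)$\Rightarrow$(3) is fine; it is the paper's (1)$\Rightarrow$(2)$\Rightarrow$(3) done in one stroke. The other two steps do not hold up.

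For (3)$\Rightarrow$(2) there is nothing to cite. The implication ``$\kappa^+(X)=-\infty\Rightarrow$ all covariant tensors vanish'' is not available unconditionally; the paper only gets it under \eqref{eq:HN-nonnegative}, by passing through (1). Your Schur-functor sketch does not prove it either: a rank-one subsheaf of $\mathbb S_\lambda\Omega_X^1$ carrying a section is in general not a subsheaf of any $\Omega_X^p$, so it says nothing about $\kappa^+(X)$. (In your alternative direct proof of (3)$\Rightarrow$(1) you also cannot invoke Theorem~\ref{thm:uniruled}, which assumes (2). Uniruledness is unnecessary anyway, since for non-uniruled $X$ the MRC base is $X$ itself.)

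So everything rests on your last paragraph, and the obstacle you flag there is a genuine gap. Your slope observation does give $K_Z\cdot\alpha\le0$, where $\alpha$ is the push-forward of $\omega_0^{n-1}$, because $\det(T_X/T_{X/Z})^{*}$ is $f^*K_Z$ plus an effective divisor. Pseudoeffectivity of $K_Z$ then gives $K_Z\cdot\alpha=0$. But this does not give $K_Z\equiv0$. Zero pairing forces a pseudoeffective class to vanish when the pairing is against a strictly positive form, as $\omega_0^{n-1}$ is on $X$, but a push-forward through a meromorphic map is degenerate.

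Here is a concrete counterexample to that step. Take $X=Z_0\times\mathbb P^1$ with $Z_0$ an abelian surface; this $X$ satisfies \eqref{eq:HN-nonnegative} for a product metric. Let the MRC base model be the blow-up $\beta\colon Z\to Z_0$ at a point $p$, with exceptional curve $E$. The resolution is $\tilde X=Z\times\mathbb P^1$, with $\tilde f\colon\tilde X\to Z$ and $\pi\colon\tilde X\to X$. There $\tilde f^{*}E=E\times\mathbb P^1$ maps onto the curve $\{p\}\times\mathbb P^1$, so $E\cdot\alpha=\int_{\tilde f^*E}\pi^*\omega_0^{2}=0$, while $K_Z=E\not\equiv0$. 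Since the MRC base is only a birational class, the Beauville--Bogomolov decomposition cannot be applied to the model at hand. Theorem~\ref{thm2} would need an actual Gauduchon metric $\omega$ on $Z$ with $\mu_{\omega,\max}(T_Z)=0$, which you never produce.

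The missing idea is to stay on $X$, where $\omega_0^{n-1}$ is strictly positive. Take a pseudoeffective invertible subsheaf $L\subset\Omega_X^p$. The paper takes the one given by the Campana--Demailly--Peternell criterion; concretely you can take the saturation of $f^*K_Z$ in $\Omega_X^{m}$, $m=\dim Z$, whose dual is $\det(T_X/T_{X/Z})$. Your slope observation is then exactly $\deg_{\omega_0}L\le0$ (Lemma~\ref{lem:dual-degree} in general). A closed positive current in $c_1^{\BC}(L)$ therefore has zero mass against $\omega_0^{n-1}$, hence vanishes, so $c_1^{\BC}(L)=0$ (Lemma~\ref{lem:zero-mass}). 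Since $H^1(X,\cO_X)=0$, $L$ is torsion (Corollary~\ref{cor:torsion}). This contradicts (3), because $\kappa(X,L)=0$. It also contradicts (2), because $L^{\otimes N}\cong\cO_X$ gives a nonzero section of $(\Omega_X^1)^{\otimes pN}$.

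This is the paper's proof of (3)$\Rightarrow$(1). Together with (1)$\Rightarrow$(2) and the elementary (2)$\Rightarrow$(3), it closes the cycle, with no use of uniruledness, finite covers, or structure theory for $Z$.
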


Cao proved that if $-K_X$ is nef on a compact K\"ahler manifold, then $T_X$ is generically $\omega^{n-1}$-semipositive for every K\"ahler class $[\omega]$; equivalently, all Harder-Narasimhan slopes with respect to $[\omega^{n-1}]$ are nonnegative \cite[Theorem 1.2]{Cao2013}. In the same paper, Cao characterizes rational connectedness by the vanishing of holomorphic covariant tensors under the assumption that $-K_X$ is nef \cite[Proposition 1.3]{Cao2013}; Cao also points out the related earlier algebraic result of Q. Zhang \cite[Corollary 1]{Zhang2005}. Thus nefness of $-K_X$ implies the hypothesis \eqref{eq:HN-nonnegative} for every K\"ahler metric. The hypothesis of Theorem \ref{thm:mumford} is therefore formally weaker and requires HN-nonnegativity with respect to only one Gauduchon metric. In particular, the Demailly-Peternell-Schneider conjecture \ref{conj:DPS} follows from Theorem \ref{thm:mumford} and  \cite[Theorem 1.2]{Cao2013}.

The structure theory of manifolds with nef anticanonical bundle has also advanced substantially. Cao and H\"oring established the expected structure theorem for smooth projective manifolds \cite{CaoHoring2019}; Matsumura and Wang extended it to projective klt pairs \cite{MWang2025}; and Matsumura and Wu treated the compact K\"ahler threefold case \cite{MWu2025}. The recent preprint of Matsumura, Wang, Wu and Zhang \cite[Corollary 1.6]{MWWZ2026} proves the full compact K\"ahler structure theorem: if $-K_X$ is nef, then $X$ admits a
locally constant fibration $f:X\longrightarrow Y$ whose fibers are rationally connected and whose base satisfies $c_1(Y)=0$. This also gives a direct structural route to Conjecture \ref{conj:DPS}. Indeed, if $q:=\dim Y>0$, the Beauville-Bogomolov-Yau decomposition implies that $K_Y$ is torsion \cite{Beauville}. Since $f$ is a smooth submersion, the natural map
\begin{equation}
 f^*K_Y=f^*\Omega_Y^q\hookrightarrow\Omega_X^q
\end{equation}
is injective. Hence a torsion line bundle with Kodaira dimension zero occurs as an invertible subsheaf of $\Omega_X^q$, contradicting $\kappa^+(X)=-\infty$. Thus $Y$ is a point and $X$ is rationally connected. Our proof is different: it uses neither nefness of $-K_X$ nor an MRC fibration, and requires $T_X$ to be HN-nonnegative with respect to only one Gauduchon metric.

The paper is organized as follows. In Section 2, we prove Theorem \ref{thm:metric} and show that the pseudoeffective invertible subsheaves considered in Lemma \ref{lem:zero-mass} have vanishing first Bott-Chern class. In Section 3, we prove Theorems \ref{thm2} and \ref{thm:uniruled}. Section 4 is devoted to the proof of Theorem \ref{thm:mumford}.

\section{Criterion for nonpositive mean curvature}

\begin{lemma}\label{lem:GC}
Let $\cS\subset E$ be a saturated coherent subsheaf of rank $s$, and let $\Sigma$ be the analytic subset outside which $\cS$ and $E/\cS$ are locally free. If $\gamma$ is the second fundamental form of $\cS|_{X\setminus\Sigma}$
with respect to $H$, then
\begin{equation}
\begin{split}
2\pi\deg_\omega(\cS)
&=\int_{X\setminus\Sigma}
 \operatorname{tr}\bigl(\pi_{H}^{\cS}\Phi_{H,\omega}|_{\cS}\bigr)\,dV_\omega
-\int_{X\setminus\Sigma}|\gamma|^2\,dV_\omega \\
&\leq s\int_X\lambda_{\max}(\Phi_{H,\omega})\,dV_\omega
-\|\gamma\|_{L^2}^2,\label{eq:GC}
\end{split}
\end{equation}
where $\pi_{H}^{\cS}: E\rightarrow E$ is the orthogonal projection onto $\cS$ with respect to the metric $H$.
\end{lemma}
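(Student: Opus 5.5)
This is the Chern–Weil formula for subsheaves in the Gauduchon setting, as in Kobayashi and Lübke–Teleman. The plan has two parts: compute $\deg_\omega(\cS)$ through the metric induced by $H$ on $\det\cS$ over $X\setminus\Sigma$, and then bound the resulting trace pointwise by the largest eigenvalue.

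\emph{Step 1: the formula on $X\setminus\Sigma$.} On $X\setminus\Sigma$, $\cS$ is a holomorphic subbundle of $E$; give it the induced metric $H_\cS$. The Gauss–Codazzi equation gives
\[
F_{H_\cS}=\pi^\cS_H F_H|_\cS-\gamma^*\wedge\gamma ,
\]
with $\gamma\in A^{1,0}(\operatorname{Hom}(\cS,\cS^\perp))$ in the sign convention where $\gamma^*$ is a $(0,1)$-form. Contracting with $\sqrt{-1}\Lambda_\omega$ and taking traces yields
\[
\operatorname{tr}\sqrt{-1}\Lambda_\omega F_{H_\cS}=\operatorname{tr}(\pi^\cS_H\Phi_{H,\omega}|_\cS)-|\gamma|^2 .
\]
The sign of the $|\gamma|^2$ term has to be checked carefully, but this is the standard fact that curvature decreases in subbundles.

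\emph{Step 2: the degree equals the integral over $X\setminus\Sigma$ (main obstacle).} We need
\[
2\pi\deg_\omega(\cS)=\int_{X\setminus\Sigma}\operatorname{tr}\sqrt{-1}\Lambda_\omega F_{H_\cS}\,dV_\omega .
\]
Here $\Sigma$ has codimension $\ge2$ because $\cS$ is saturated in a locally free sheaf. The metric $\det H_\cS$ on $\det\cS|_{X\setminus\Sigma}$ is singular along $\Sigma$. I would follow the argument of Kobayashi's book (Ch.~V) or Lübke–Teleman, extended to Gauduchon metrics by Bruasse and Li–Yau.
\begin{itemize}
\item Fix a smooth metric $\hat H$ on the line bundle $\det\cS$. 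The section $\det\cS\to\Lambda^sE$ gives $\det H_\cS=e^{\varphi}\hat H$ on $X\setminus\Sigma$, where $\varphi=\log|\sigma|^2_{H,\hat H}$ is quasi-plurisubharmonic with analytic singularities along the degeneracy locus.
\item Then
\[
c_1(\det\cS,\det H_\cS)=c_1(\det\cS,\hat H)-\tfrac{\sqrt{-1}}{2\pi}\partial\bar\partial\varphi
\]
on $X\setminus\Sigma$, as forms.
\item The pullback $\gamma$ is $L^2$, via the integrability of $|\gamma|^2$ shown in those references, and $\operatorname{tr}\Phi$ is bounded. So $\sqrt{-1}\partial\bar\partial\varphi\wedge\omega^{n-1}$ is integrable on $X\setminus\Sigma$.
\item Its integral vanishes by $\partial\bar\partial\omega^{n-1}=0$, using cutoff functions $\chi_\epsilon$ equal to $1$ off an $\epsilon$-neighbourhood of the codimension-$\ge2$ set. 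The boundary terms are controlled since $\varphi$ is locally $\log$ of a sum of squares of holomorphic functions, so $\nabla\varphi\in L^{p}$ for $p<2$. This suffices in codimension $\ge2$.
\end{itemize}
Alternatively, resolve $\Sigma$ by blowing up so that $\cS$ becomes a subbundle. Then $\varphi$ extends smoothly away from a divisor, and the computation reduces to the fact that the current $\sqrt{-1}\partial\bar\partial\varphi$ differs from its restriction by a positive divisor pushed to codimension $\ge2$. This divisor contributes zero $\omega$-degree downstairs.

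\emph{Step 3: the inequality.} Pointwise on $X\setminus\Sigma$, $\pi^\cS_H\Phi_{H,\omega}|_\cS$ is the compression of a Hermitian endomorphism to an $s$-dimensional subspace. By the min–max principle its trace is at most $s\lambda_{\max}(\Phi_{H,\omega})$. Since $\Sigma$ has measure zero, integrating gives the stated bound, and the $\gamma$-term becomes $-\|\gamma\|_{L^2}^2$.
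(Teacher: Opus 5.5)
Your proposal is correct, and its skeleton is the paper's: Gauss--Codazzi on $X\setminus\Sigma$, then identification of $2\pi\deg_\omega(\cS)$ with $\int_{X\setminus\Sigma}\operatorname{tr}\sqrt{-1}\Lambda_\omega F_{H_\cS}\,dV_\omega$, then the pointwise bound $\operatorname{tr}(\pi_H^{\cS}\Phi_{H,\omega}|_{\cS})\le s\lambda_{\max}(\Phi_{H,\omega})$. The only difference is how Step 2 is justified.

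\textbf{The paper's route.} The paper uses what you list as the alternative. On a modification $\rho\colon\tilde X\to X$ where $\cS$ becomes a subbundle, the induced metric is smooth and $\rho^*\omega^{n-1}$ is still $\partial\bar\partial$-closed. The exceptional divisor pairs to zero with $\rho^*\omega^{n-1}$ because its image has dimension at most $n-2$. This gives the identity and the integrability of $|\gamma|^2$ at once, with no estimates.

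\textbf{Your cutoff route.} It stays on $X$ and avoids resolution of singularities. In exchange it needs three analytic inputs, which you should make explicit.
\begin{enumerate}
\item $\nabla\varphi\in L^p$ for all $p<2$, which holds for any quasi-psh function.
\item The tube estimate $\Vol_\omega(U_\epsilon)=O(\epsilon^4)$ for the $\epsilon$-neighbourhood of $\Sigma$. This gives $\|\nabla\chi_\epsilon\|_{L^q}\to0$ for $q<4$, and it is exactly where codimension $\ge2$ enters.
\item The Gauduchon condition, to dispose of the extra term $\int\chi_\epsilon\,\bar\partial\varphi\wedge\partial\omega^{n-1}$. This term appears because $\omega^{n-1}$ is not closed. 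It tends to $\int_X\bar\partial\varphi\wedge\partial\omega^{n-1}$, which vanishes after integrating by parts against $\partial\bar\partial\omega^{n-1}=0$.
\end{enumerate}

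You also need not import $|\gamma|^2\in L^1$ from the literature. Since $\sqrt{-1}\partial\bar\partial\varphi\ge -C\omega$ on $X\setminus\Sigma$, the same integration by parts bounds $\int\chi_\epsilon(\sqrt{-1}\partial\bar\partial\varphi+C\omega)\wedge\omega^{n-1}$ uniformly in $\epsilon$. Fatou's lemma then gives integrability of $\sqrt{-1}\partial\bar\partial\varphi\wedge\omega^{n-1}$, and hence of $|\gamma|^2$ via your Step 1.
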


\begin{proof}
On $X\setminus\Sigma$, the curvature formula for a Hermitian holomorphic
subbundle is
\begin{equation}
F_{\cS}=\pi_{H}^{\cS}F_H|_{\cS}-\gamma^*\wedge\gamma
\end{equation}
\cite[Chapter V, \S6]{Kobayashi}. The equality in \eqref{eq:GC} follows by resolving $\cS\subset E$ to a subbundle on a modification and integrating the trace of the curvature formula against the pullback of $\omega^{n-1}/(n-1)!$. The Gauduchon condition gives the degree term, and the exceptional divisors contribute zero because their images have codimension at least two; see \cite{BS1994,LT1995} for background. Finally,
\begin{equation}
 \operatorname{tr}\bigl(\pi_{H}^{\cS}\Phi_{H,\omega}|_{\cS}\bigr) \leq s\lambda_{\max}(\Phi_{H,\omega})
\end{equation}
pointwise.
\end{proof}

\begin{corollary}\label{cor:splitting}
Let $a\in\mathbb R$ and assume that $H_0$ satisfies \eqref{eq:integral-condition}. Then $\mu_{\omega,\max}(E)\leq a$. If a saturated subsheaf $\cS\subset E$ of rank $s$ satisfies $\mu_\omega(\cS)=a$, then $\cS$ is a holomorphic subbundle and $E=\cS\oplus\cS^{\perp_{H_0}} $holomorphically.
\end{corollary}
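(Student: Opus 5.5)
The argument rests entirely on Lemma \ref{lem:GC} applied with $H=H_0$. For the first claim, let $\cS\subset E$ be any nonzero coherent subsheaf, of rank $s$. Passing to its saturation does not decrease the degree, so we may assume $\cS$ is saturated. The lemma then gives
\begin{equation*}
2\pi\deg_\omega(\cS)\leq s\int_X\lambda_{\max}(\Phi_{H_0,\omega})\,dV_\omega-\|\gamma\|_{L^2}^2\leq 2\pi s a-\|\gamma\|_{L^2}^2 .
\end{equation*}
Hence $\mu_\omega(\cS)\leq a$ for every subsheaf. The maximal destabilizing subsheaf is in particular a subsheaf, and its slope is $\mu_{\omega,\max}(E)$, so $\mu_{\omega,\max}(E)\leq a$.

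For the second claim, take $\cS$ saturated with $\mu_\omega(\cS)=a$, so that $2\pi\deg_\omega(\cS)=2\pi s a$. The displayed chain then forces $\|\gamma\|_{L^2(X\setminus\Sigma)}=0$, so $\gamma\equiv0$ on $X\setminus\Sigma$. There the second fundamental form vanishes, which means the $H_0$-orthogonal projection $\pi:=\pi^{\cS}_{H_0}$ is parallel for the Chern connection. In particular $\bar\partial\pi=0$, and therefore also $\bar\partial(\Id-\pi)=0$, since $\pi$ is self-adjoint and $\gamma^*=0$ as well. So $\pi$ is a holomorphic idempotent endomorphism of $E$ over $X\setminus\Sigma$, and $\cS^{\perp_{H_0}}$ is a holomorphic subbundle there, with $E|_{X\setminus\Sigma}=\cS\oplus\cS^{\perp_{H_0}}$ holomorphically.

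The remaining task, which I expect to be the main point needing care, is to extend across $\Sigma$. Since $\cS$ is saturated in the locally free sheaf $E$, it is reflexive, so $\operatorname{codim}\Sigma\geq2$. The section $\pi$ of $\operatorname{End}(E)$ is holomorphic on $X\setminus\Sigma$. It is also bounded there, because it is an $H_0$-orthogonal projection and $H_0$ is a smooth metric on all of $X$. By the Riemann extension theorem (Hartogs, using codimension $\geq2$, or boundedness alone), $\pi$ extends to a holomorphic endomorphism $\tilde\pi$ of $E$ on all of $X$. By continuity, $\tilde\pi^2=\tilde\pi$ everywhere.

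The rank of an idempotent equals its trace, which is a continuous integer-valued function and hence constant, equal to $s$. Therefore $\operatorname{im}\tilde\pi$ and $\ker\tilde\pi$ are holomorphic subbundles of ranks $s$ and $\operatorname{rk}E-s$, and $E=\operatorname{im}\tilde\pi\oplus\ker\tilde\pi$. Finally, we identify these with $\cS$ and its complement. The sheaf $\operatorname{im}\tilde\pi$ agrees with $\cS$ off $\Sigma$, and both are saturated (reflexive) subsheaves of $E$. Two saturated subsheaves that coincide outside a codimension $\geq2$ set are equal, so $\cS=\operatorname{im}\tilde\pi$ is a subbundle. By continuity, $\ker\tilde\pi=\cS^{\perp_{H_0}}$, since $\tilde\pi$ remains the orthogonal projection everywhere. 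This gives $E=\cS\oplus\cS^{\perp_{H_0}}$ holomorphically.
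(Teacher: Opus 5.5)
Your proof is correct and follows essentially the same route as the paper: Lemma~\ref{lem:GC} with $H=H_0$ gives the slope bound and forces $\gamma=0$ when $\mu_\omega(\cS)=a$, and the bounded holomorphic projection $\pi^{\cS}_{H_0}$ then extends across $\Sigma$ by Riemann extension; your trace/constant-rank and saturation arguments only fill in details the paper leaves implicit. One small imprecision: $\codim\Sigma\geq2$ comes from the torsion-freeness of $E/\cS$ (i.e.\ saturatedness), not from reflexivity of $\cS$ alone, since $\Sigma$ also contains the locus where the quotient fails to be locally free.
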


\begin{proof}
For every saturated subsheaf $\cF\subset E$ of rank $r$, Lemma \ref{lem:GC} and \eqref{eq:integral-condition} give
\begin{equation}
2\pi\deg_\omega(\cF) \leq r\int_X\lambda_{\max}(\Phi_{H_0,\omega})\,dV_\omega \leq2\pi r a.
\end{equation}
Hence $\mu_{\omega,\max}(E)\leq a$. If $\mu_\omega(\cS)=a$, then $\deg_\omega(\cS)=sa$ and \eqref{eq:GC} yields
\begin{equation}
2\pi sa \leq s\int_X\lambda_{\max}(\Phi_{H_0,\omega})\,dV_\omega -\|\gamma\|_{L^2}^2 \leq2\pi sa-\|\gamma\|_{L^2}^2.
\end{equation}
Thus $\gamma=0$ (and in fact equality holds in \eqref{eq:integral-condition}). The orthogonal projection $\pi_{H_{0}}^{\cS}$ is therefore parallel on $X\setminus\Sigma$ and in particular holomorphic there. Its matrix coefficients are locally bounded, because $\pi_{H_{0}}^{S}$ is an orthogonal projection. Since $\codim_{\mathbb{C}} \Sigma \geq 2$, the Riemann extension theorem gives a holomorphic extension of $\pi_{H_{0}}^{S}$ to an endomorphism of $E$ on $X$. The identity $(\pi_{H_{0}}^{\cS})^2=\pi_{H_{0}}^{\cS}$ extends as well, so its image and kernel are holomorphic subbundles. Since $\cS$ is saturated, the image of the extended projection is $\cS$.
\end{proof}

The following lemma avoids a circular argument that could otherwise arise when the maximal destabilizing subsheaf of slope $a$ is the whole bundle.

\begin{lemma}\label{lem:rigid}
Let $a\in\mathbb R$. Assume that a holomorphic vector bundle $E$ admits a metric $H_0$ satisfying \eqref{eq:integral-condition}. If $E$ is $\omega$-semistable with $\mu_\omega(E)=a$, then $E$ is $\omega$-polystable of slope $a$.
\end{lemma}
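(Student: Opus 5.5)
Induction on $\rk E$, using a Jordan--H\"older type argument. If $E$ is $\omega$-stable there is nothing to prove. Otherwise $E$ is semistable but not stable, so there is a saturated subsheaf $0\neq\cS\subsetneq E$ with $\mu_\omega(\cS)=a$. Choose $\cS$ of minimal rank among such subsheaves; then $\cS$ is $\omega$-stable of slope $a$.

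Corollary~\ref{cor:splitting} applies because $H_0$ satisfies \eqref{eq:integral-condition}. It shows that $\cS$ is a holomorphic subbundle and that $E=\cS\oplus\cS^{\perp_{H_0}}$ holomorphically. Its proof also shows that equality holds in \eqref{eq:integral-condition}, namely
\[\int_X\lambda_{\max}(\Phi_{H_0,\omega})\,dV_\omega=2\pi a,\]
and that the second fundamental form $\gamma$ vanishes. Hence $H_0$ splits orthogonally and $D_{H_0}$ preserves both summands. With $Q:=\cS^{\perp_{H_0}}$ and $H_0|_Q$, the curvature is block diagonal,
\[\Phi_{H_0,\omega}=\Phi_{H_0|_{\cS},\omega}\oplus\Phi_{H_0|_Q,\omega}.\]
It follows that $\lambda_{\max}(\Phi_{H_0|_Q,\omega})\le\lambda_{\max}(\Phi_{H_0,\omega})$ pointwise, so $H_0|_Q$ again satisfies \eqref{eq:integral-condition} on $Q$.

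Next I check that $Q$ is semistable of slope $a$. Since $\deg_\omega E=ra$ and $\deg_\omega\cS=sa$, we get $\mu_\omega(Q)=a$. Any subsheaf of $Q$ is a subsheaf of $E$, so its slope is at most $a$. Also $\rk Q<\rk E$, so the induction hypothesis gives that $Q$ is polystable of slope $a$. Therefore $E=\cS\oplus Q$ is polystable of slope $a$.

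The base case $\rk E=1$ is trivial, since line bundles are stable. The only point needing care, and the one the lemma is designed to handle, is that the argument must not invoke the maximal destabilizing subsheaf. When $E$ is semistable, that subsheaf is $E$ itself and gives no information. Working instead with a minimal-rank slope-$a$ subsheaf, together with the hereditary nature of \eqref{eq:integral-condition} under the orthogonal splitting, avoids this circularity. The step to verify carefully is that the splitting $E=\cS\oplus Q$ is isometric for $H_0$, so that the curvature really is block diagonal. This follows from $\gamma=0$ on $X\setminus\Sigma$, and here $\Sigma=\emptyset$ once $\cS$ is a subbundle. Because the splitting is holomorphic and orthogonal, the second fundamental form of $Q$ vanishes as well.
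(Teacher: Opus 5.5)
Your proof is correct and takes essentially the same route as the paper: induction on rank, splitting off a stable saturated slope-$a$ subsheaf via Corollary~\ref{cor:splitting}, and passing the integral condition to the $H_0$-orthogonal complement through the block-diagonal mean curvature. Your minimal-rank saturated slope-$a$ subsheaf plays exactly the role of the paper's first Jordan--H\"older term, and your explicit check that the complement is semistable of slope $a$ supplies a step the paper only asserts.
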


\begin{proof}
We argue by induction on $\rk E$. If $E$ is stable there is nothing to prove. Otherwise, choose the first nonzero term $\cS$ in a Jordan-H\"older filtration of $E$. It may be chosen saturated; it is stable of slope $a$ \cite{Kobayashi,LT1995}. By Corollary \ref{cor:splitting}, $\cS$ is a holomorphic direct factor,
\begin{equation}
 E=\cS\oplus E'.
\end{equation}
The quotient $E'$ is semistable of slope $a$. Moreover, the induced metric on $E'$ still satisfies the same integral hypothesis: the vanishing of the second fundamental form makes the mean curvature block diagonal and
\begin{equation}
\lambda_{\max}(\Phi_{H_0|E',\omega}) \leq\lambda_{\max}(\Phi_{H_0,\omega})
\end{equation}
pointwise. The induction hypothesis applies to $E'$, so $E'$ is polystable of slope $a$. Therefore $E$ is polystable of slope $a$.
\end{proof}

\begin{proof}[Proof of Theorem \ref{thm:metric}]
We prove $\textup{(\ref{(i)})}\Rightarrow \textup{(\ref{(ii)})}\Rightarrow \textup{(\ref{(iii)})}\Rightarrow \textup{(\ref{(i)})}$.

Assume (\ref{(i)}). By Corollary \ref{cor:splitting}, $\mu_{\omega,\max}(E)\leq a$. If $\mu_{\omega,\max}(E)<a$, we simply take $E_a=0$, $E_{<a}=E$. Suppose now that $\mu_{\omega,\max}(E)=a$, and let $\cS$ be the maximal
destabilizing subsheaf, i.e., the first Harder-Narasimhan term. Then $\cS$ is saturated and semistable of slope $a$, while every HN slope of $\cQ:=E/\cS$ is strictly smaller than $a$ unless $\cQ=0$. Since $\mu_\omega(\cS)=a$, Corollary \ref{cor:splitting} gives a holomorphic orthogonal splitting $E=\cS\oplus\cQ$. By Lemma \ref{lem:rigid}, $\cS$ is polystable of slope $a$. Thus (\ref{(ii)}) holds with $E_a=\cS$ and $E_{<a}=\cQ$. Furthermore, we denote the $s$ eigenvalues of the mean curvature $\Phi_{H_0|E_{a},\omega}$  by $\lambda_1$, $\lambda_2$, $\cdots$, $\lambda_s$, listed in nonincreasing order, where $H_0|E_{a}$ is the induced metric on $E_{a}$ with respect to $H_{0}$. We have
\begin{equation}\label{ei}
\lambda_{1}=\lambda_{\max}(\Phi_{H_0|E_{a},\omega}) \leq\lambda_{\max}(\Phi_{H_0,\omega}).
\end{equation}
By (\ref{eq:GC}), we obtain
\begin{equation}
\begin{split}
2\pi s a&= 2\pi s \mu_{\omega }(E_{a})=2\pi\deg_\omega(\cS)=\int_{X}
 \operatorname{tr}\bigl(\Phi_{H_0|E_{a},\omega})\,dV_\omega\\
&=\int_{X}
 (\sum_{i=1}^{s}\lambda_{i})\,dV_\omega \leq s\int_X\lambda_{\max}(\Phi_{H_{0},\omega})\,dV_\omega\leq 2\pi s a
\label{eq:GC2}
\end{split}
\end{equation}
and 
\begin{equation}
\lambda_{1}=\cdots =\lambda_{s}=\lambda_{\max}(\Phi_{H_0|E_{a},\omega}) =\lambda_{\max}(\Phi_{H_0,\omega})\Id_{E_{a}}.
\end{equation}
Then, the induced metric $H_0|E_{a}$ is a Hermitian-Einstein metric on $E_{a}$, that is,
\begin{equation}
\Phi_{H_0|E_{a},\omega}=\lambda_{\max}(\Phi_{H_0,\omega})\Id_{E_{a}}.
\end{equation}
If $\Phi_{H_{0},\omega}\leq \frac{2\pi a}{\Vol_\omega(X)}\Id_E$, i.e., $\lambda_{\max}(\Phi_{H_0,\omega})\leq \frac{2\pi a}{\Vol_\omega(X)}$, then (\ref{ei}) and (\ref{eq:GC2}) imply that
\begin{equation}
\Phi_{H_{0}|_{E_{a}},\omega}= \frac{2\pi a}{\Vol_\omega(X)}\Id_{E_{a}}.
\end{equation}

Assume (\ref{(ii)}). If $E_a\neq0$, then the Kobayashi-Hitchin correspondence on compact Gauduchon manifolds gives a Hermitian-Einstein metric $H_a$ satisfying
\begin{equation}
\Phi_{H_a,\omega}=\frac{2\pi a}{\Vol_\omega(X)}\Id_{E_a}
\end{equation}
\cite{LY1987}. If $E_{<a}\neq0$, then $\mu_{\omega,\max}(E_{<a})<a$, and \eqref{eq:strict-correspondence} gives a metric $H_{<a}$ satisfying
\begin{equation}
 \Phi_{H_{<a},\omega}
 <\frac{2\pi a}{\Vol_\omega(X)}\Id_{E_{<a}}.
\end{equation}
Taking the direct-sum metric on the nonzero summands gives a metric $H$ on $E$ satisfying \eqref{eq:pointwise-boundary}. This proves (\ref{(iii)}).

Finally, (\ref{(iii)}) immediately implies
\begin{equation}
\lambda_{\max}(\Phi_{H,\omega}) \leq\frac{2\pi a}{\Vol_\omega(X)}
\end{equation}
pointwise. Integration over $X$ yields (\ref{(i)}).
\end{proof}

\begin{remark}\label{rem:strictness}
The integral hypothesis \eqref{eq:integral-condition} is stronger than the numerical inequality $\mu_{\omega,\max}(E)\leq a$. The semistable but nonpolystable bundles of slope $a$ discussed in the introduction satisfy $\mu_{\omega,\max}(E)= a$ but by Theorem \ref{thm:metric}, admit no Hermitian metric satisfying \eqref{eq:integral-condition}. Thus the integral condition detects precisely the splitting obstruction when $\mu_{\omega,\max}(E)=a$. When $a=0$, Theorem \ref{thm:metric} says that the integral nonpositivity condition is equivalent to a splitting into a polystable summand of slope zero and an HN-negative summand, and also equivalent to the existence of a metric with nonpositive mean curvature.
\end{remark}

The next observation is the key simplification in the proof of Theorem \ref{thm:mumford}. It requires HN-nonnegativity only for one Gauduchon metric.

\begin{lemma}\label{lem:dual-degree}
Let $(X,\omega)$ be a compact Gauduchon manifold and let $E$ be a holomorphic vector bundle with $\mu_{\omega,\min}(E)\geq0$. Then, for every $m\geq1$ and every invertible subsheaf $L\subset(E^*)^{\otimes m}$,
\begin{equation}
\deg_\omega(L)\leq0.
\end{equation}
The same conclusion holds for every invertible subsheaf of a holomorphic subbundle of $(E^*)^{\otimes m}$.
\end{lemma}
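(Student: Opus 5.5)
The plan is to dualize the HN-nonnegativity into an HN-nonpositivity statement for $E^*$, use the metric characterization \eqref{eq:strict-correspondence} to produce metrics on $E^*$ with mean curvature bounded by arbitrarily small positive constants, take tensor powers of these metrics, and then control invertible subsheaves through Lemma \ref{lem:GC}.

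\emph{Step 1: dualize.} Since $\mu_{\omega,\min}(E)\geq0$ and the Harder--Narasimhan filtration dualizes, we have $\mu_{\omega,\max}(E^*)=-\mu_{\omega,\min}(E)\leq0$. Hence $\mu_{\omega,\max}(E^*)<\varepsilon$ for every $\varepsilon>0$.

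\emph{Step 2: metrics on $E^*$.} By \eqref{eq:strict-correspondence}, for each $\varepsilon>0$ there is a metric $H_\varepsilon$ on $E^*$ with
\[
\Phi_{H_\varepsilon,\omega}<\frac{2\pi\varepsilon}{\Vol_\omega(X)}\Id_{E^*}.
\]

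\emph{Step 3: tensor powers.} Put $H_\varepsilon^{\otimes m}$ on $(E^*)^{\otimes m}$. The Chern curvature of a tensor product is the sum of the factor curvatures acting on the respective slots, and $\Lambda_\omega$ is linear. Therefore
\[
\Phi_{H_\varepsilon^{\otimes m},\omega}=\sum_{j=1}^m \Id\otimes\cdots\otimes\Phi_{H_\varepsilon,\omega}\otimes\cdots\otimes\Id<\frac{2\pi m\varepsilon}{\Vol_\omega(X)}\Id.
\]
Here the factor $\Phi_{H_\varepsilon,\omega}$ sits in the $j$-th slot. This uses that the eigenvalues of such a sum of commuting Hermitian endomorphisms are sums of eigenvalues. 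It follows that $\lambda_{\max}\leq 2\pi m\varepsilon/\Vol_\omega(X)$ pointwise.

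\emph{Step 4: apply Lemma \ref{lem:GC}.} Let $L\subset(E^*)^{\otimes m}$ be invertible and let $\bar L$ be its saturation. Then $\bar L$ is reflexive of rank one, hence invertible, and $\deg_\omega(L)\leq\deg_\omega(\bar L)$, since $\bar L=L\otimes\cO(D)$ with $D$ effective and $\omega$ is Gauduchon. Applying Lemma \ref{lem:GC} with $s=1$ gives
\[
2\pi\deg_\omega(\bar L)\leq\int_X\lambda_{\max}\,dV_\omega\leq2\pi m\varepsilon.
\]
Letting $\varepsilon\to0$ yields $\deg_\omega(L)\leq0$.

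\emph{Subbundle case.} Let $F\subset(E^*)^{\otimes m}$ be a holomorphic subbundle and $L\subset F$ invertible. Then $L$ is also an invertible subsheaf of $(E^*)^{\otimes m}$, so the first part applies directly. Alternatively, restrict the metric to $F$: by the Gauss--Codazzi formula the mean curvature of $F$ is the compression of the ambient one minus a nonnegative term, so its $\lambda_{\max}$ is still bounded by $2\pi m\varepsilon/\Vol_\omega(X)$.

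\emph{Expected obstacle.} The only delicate point is Step 3, namely checking that the strict pointwise bound is preserved under tensor powers with the factor $m$. A secondary point is the inequality $\deg(L)\leq\deg(\bar L)$ on a Gauduchon manifold; this uses positivity of $\int_D\omega^{n-1}$ for an effective divisor $D$, which is standard.
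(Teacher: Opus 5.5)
Your proposal is correct and follows essentially the same route as the paper. Both arguments dualize to $\mu_{\omega,\max}(E^*)\leq0$, use \eqref{eq:strict-correspondence} to produce metrics on $E^*$ with mean curvature below an arbitrarily small positive constant, pass to the tensor-power metric (picking up the factor $m$), apply Lemma~\ref{lem:GC} to the saturation of $L$, let $\varepsilon\to0$, and handle the subbundle case by composing inclusions, with your added justifications (eigenvalues of the commuting slot-wise sum, and $\deg_\omega(L)\leq\deg_\omega(\bar L)$ via an effective divisor) filling in details the paper leaves implicit.
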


\begin{proof}
After replacing $L$ by its saturation, which can only increase its degree, we may assume that $L$ is saturated. By duality for Harder-Narasimhan slopes \cite{Kobayashi,LT1995},
\begin{equation}
 \mu_{\omega,\max}(E^*)=-\mu_{\omega,\min}(E)\leq0.
\end{equation}
Fix $\varepsilon>0$. By \eqref{eq:strict-correspondence}, there exists a Hermitian metric $H_\varepsilon$ on $E^*$ such that
\begin{equation}
 \Phi_{H_\varepsilon,\omega}<\varepsilon\Id_{E^*}.
\end{equation}
The induced metric on $(E^*)^{\otimes m}$ satisfies
\begin{equation}
\Phi_{H_\varepsilon^{\otimes m},\omega}<m\varepsilon\Id.
\end{equation}
Applying Lemma~\ref{lem:GC} to $L$ derives
\begin{equation}
 2\pi\deg_\omega(L)\leq m\varepsilon\Vol_\omega(X).
\end{equation}
Letting $\varepsilon\rightarrow0$ yields $\deg_\omega(L)\leq0$. The assertion for a subbundle follows by composition with the inclusion into $(E^*)^{\otimes m}$.
\end{proof}

\begin{lemma}\label{lem:zero-mass}
In the setting of Lemma \ref{lem:dual-degree}, assume in addition that $L$ is pseudoeffective. Then
\begin{equation}
 c_1^{\BC}(L)=0.
\end{equation}
\end{lemma}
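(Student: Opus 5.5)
The plan is to combine the degree bound $\deg_\omega(L)\leq 0$ from Lemma~\ref{lem:dual-degree} with pseudoeffectivity, which makes the degree of $L$ against the Gauduchon form nonnegative. Together these force $\deg_\omega(L)=0$, and we then show that a pseudoeffective Bott--Chern class with zero $\omega$-mass must vanish.

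\textbf{Step 1: $\deg_\omega(L)\geq 0$.} Pseudoeffectivity means $c_1^{\BC}(L)$ contains a closed positive $(1,1)$-current $T$. Fix a smooth Hermitian metric $h$ on $L$ and write
\[
T=c_1(L,h)+\sqrt{-1}\partial\bar\partial\varphi
\]
with $\varphi$ quasi-plurisubharmonic. The Gauduchon condition $\partial\bar\partial\omega^{n-1}=0$ kills the $\partial\bar\partial\varphi$ term after integration by parts against $\omega^{n-1}$. Hence
\[
\deg_\omega(L)=\int_X T\wedge\frac{\omega^{n-1}}{(n-1)!}\geq 0.
\]

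\textbf{Step 2: $T$ has zero mass.} Lemma~\ref{lem:dual-degree} gives $\deg_\omega(L)\leq0$, so $\int_X T\wedge\omega^{n-1}=0$. Since $\omega^{n-1}$ is a strictly positive $(n-1,n-1)$-form, the trace measure $T\wedge\omega^{n-1}$ is a positive measure whose total mass controls the full mass of $T$. Therefore $T=0$ as a current.

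\textbf{Step 3: conclusion.} From $T=0$ we get $c_1(L,h)=-\sqrt{-1}\partial\bar\partial\varphi$ with $\varphi$ a genuine $L^1$ function. Hence $c_1^{\BC}(L)=[T]=0$ in Bott--Chern cohomology.

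\textbf{Main obstacle.} The delicate point is Step 1: integrating by parts $\int\sqrt{-1}\partial\bar\partial\varphi\wedge\omega^{n-1}=\int\varphi\,\sqrt{-1}\partial\bar\partial\omega^{n-1}=0$ for a quasi-psh function $\varphi$ that is only $L^1$. I would justify this by noting that $\varphi\in L^1$ and $\sqrt{-1}\partial\bar\partial\varphi$ is a current of order zero, so the identity holds in the sense of currents on a compact manifold, where Stokes' theorem for currents applies directly. The other ingredient is that $L$, although only a subsheaf, is invertible on the manifold, hence a genuine line bundle, so both $c_1^{\BC}(L)$ and the notion of pseudoeffectivity are well defined. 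Everything else reduces to the positivity argument in Step 2.
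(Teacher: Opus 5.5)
Your proposal is correct and follows essentially the same route as the paper. Like the paper, you combine $\deg_\omega(L)\le0$ from Lemma~\ref{lem:dual-degree} with $\deg_\omega(L)=\int_X T\wedge\omega^{n-1}/(n-1)!\ge0$ for a closed positive current $T\in c_1^{\BC}(L)$, and then use that a positive $(1,1)$-current with zero mass against the strictly positive form $\omega^{n-1}$ must vanish; your integration-by-parts justification via the Gauduchon condition is just the definition of $\partial\bar\partial$ acting on currents, and it spells out a step the paper leaves implicit.
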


\begin{proof}
By Lemma \ref{lem:dual-degree}, $\deg_\omega(L)\leq0$. Since $L$ is pseudoeffective, there is a closed positive $(1,1)$-current $T\in c_1^{\BC}(L)$ \cite{Demailly2012}. Therefore
\begin{equation}
 \deg_\omega(L)=\int_XT\wedge\frac{\omega^{n-1}}{(n-1)!}\geq0.
\end{equation}
Hence the integral is zero. Since $T\geq0$ and $\omega^{n-1}$ is a smooth strictly positive $(n-1,n-1)$-form, a nonzero positive current has strictly positive mass against $\omega^{n-1}$ \cite[Chapter III]{Demailly2012}. Consequently $T=0$, and therefore $c_1^{\BC}(L)=0$.
\end{proof}

\begin{corollary}\label{cor:torsion}
Let $X$ be compact K\"ahler manifold with $H^0(X,\Omega_X^1)=0$, and let $E$ be a holomorphic vector bundle which is HN-nonnegative with respect to a Gauduchon metric $\omega$. If a pseudoeffective invertible subsheaf $L\subset(E^*)^{\otimes m}$ exists for some $m>0$, then $L$ is torsion. More precisely, there is $N>0$ such that $L^{\otimes N}\simeq\cO_X$, and hence
\begin{equation}
H^0\bigl(X,(E^*)^{\otimes mN}\bigr)\neq0.
\end{equation}
\end{corollary}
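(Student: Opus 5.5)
By Lemma \ref{lem:zero-mass}, applied to the given pseudoeffective invertible subsheaf $L\subset(E^*)^{\otimes m}$, we obtain $c_1^{\BC}(L)=0$. The plan is to turn this vanishing of the Bott--Chern class into torsion of $L$ in $\Pic(X)$, using the assumption $H^0(X,\Omega_X^1)=0$. Once $L^{\otimes N}\simeq\cO_X$ is known, a nonzero tensor follows formally.

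First I pass from Bott--Chern to de Rham and integral cohomology. Since $X$ is compact K\"ahler, the $\partial\bar\partial$-lemma identifies $c_1^{\BC}(L)=0$ with the vanishing of the real first Chern class $c_1(L)\in H^2(X,\mathbb R)$. From the exponential sequence
\begin{equation*}
H^1(X,\cO_X)\longrightarrow \Pic(X)\xrightarrow{\,c_1\,} H^2(X,\mathbb Z),
\end{equation*}
the integral class $c_1(L)$ then lies in the torsion subgroup of $H^2(X,\mathbb Z)$. Choose $N_1>0$ with $c_1(L^{\otimes N_1})=0$ in $H^2(X,\mathbb Z)$. Then $L^{\otimes N_1}$ lies in $\Pic^0(X)$, which is the image of $H^1(X,\cO_X)$.

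Next I use the vanishing of holomorphic $1$-forms. By Hodge symmetry on a compact K\"ahler manifold, $H^1(X,\cO_X)\cong\overline{H^0(X,\Omega_X^1)}=0$. Hence $\Pic^0(X)$ is trivial, so $L^{\otimes N_1}\simeq\cO_X$, and we may take $N=N_1$. The only subtle point is this passage from $c_1^{\BC}=0$ to triviality up to torsion. Without $h^{0,1}=0$, one would only get a flat unitary line bundle, so this hypothesis is exactly what the argument needs.

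Finally I produce the section. The inclusion $L\subset(E^*)^{\otimes m}$ induces a nonzero morphism
\begin{equation*}
L^{\otimes N}\longrightarrow\bigl((E^*)^{\otimes m}\bigr)^{\otimes N}=(E^*)^{\otimes mN}.
\end{equation*}
It is nonzero because it is an injection of sheaves at the generic point, where all sheaves involved are torsion-free. Composing with a trivialization $\cO_X\simeq L^{\otimes N}$ gives a nonzero element of $H^0\bigl(X,(E^*)^{\otimes mN}\bigr)$.
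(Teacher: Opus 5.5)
Your proposal is correct and follows essentially the same route as the paper: Lemma~\ref{lem:zero-mass} gives $c_1^{\BC}(L)=0$, so the integral first Chern class of $L$ is torsion, Hodge symmetry together with $H^0(X,\Omega_X^1)=0$ gives $\Pic^0(X)=0$, and tensoring the inclusion $N$ times then produces the nonzero section. One small remark: you only need the natural map $H^{1,1}_{\BC}(X,\mathbb R)\to H^2(X,\mathbb R)$, which sends $c_1^{\BC}(L)$ to $c_1(L)$, and not the full $\partial\bar\partial$-lemma.
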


\begin{proof}
By Lemma \ref{lem:zero-mass}, $c_1^{\BC}(L)=0$, hence $c_1(L)=0$ in $H^2(X,\mathbb R)$ \cite{Demailly2012}. Thus the integral first Chern class of $L$ is torsion. Since $X$ is K\"ahler, Hodge symmetry gives $H^1(X,\cO_X)=H^{0,1}(X)=0$, hence $\Pic^0(X)=0$ \cite{Voisin}. Thus, for some $N>0$, $L^{\otimes N}$ has zero integral Chern class and belongs to $\Pic^0(X)$; therefore $L^{\otimes N}\simeq\cO_X$. Tensoring the inclusion $L\subset(E^*)^{\otimes m}$ $N$ times yields the required nonzero section.
\end{proof}

\section{HN-nonpositivity and uniruledness}

Theorem 1.1 in \cite{Ou2025} and Corollary 5.3 in \cite{CaoPaun2025} imply the following lemma.

\begin{lemma}\label{lem:generic}
Let $X$ be a compact K\"ahler manifold. Then $X$ is not uniruled if and only if
\begin{equation}
\mu_{\omega,\max}(T_X)\leq0
\end{equation}
for every Gauduchon metric $\omega$ on $X$.
\end{lemma}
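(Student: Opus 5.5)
We prove the two implications separately, translating between slopes with respect to Gauduchon metrics and positivity of $K_X$ tested against the classes $[\omega^{n-1}]$.

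\emph{Not uniruled implies nonpositive maximal slope.} Suppose $X$ is not uniruled. By Ou's theorem \cite[Theorem 1.1]{Ou2025}, $K_X$ is pseudoeffective. We then invoke Cao--P\u{a}un \cite[Corollaries 5.2 and 5.3]{CaoPaun2025}: when $K_X$ is pseudoeffective, every torsion-free quotient $\cQ$ of $\Omega_X^1$ satisfies $c_1(\cQ)\cdot\alpha\geq0$ for every movable class $\alpha$. Dually, for every saturated subsheaf $\cF\subset T_X$, the quotient $\Omega_X^1\to\cF^*$ gives $c_1(\cF)\cdot\alpha\leq0$.

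Now fix a Gauduchon metric $\omega$. The form $\omega^{n-1}$ is $\partial\bar\partial$-closed and strictly positive, so it defines a Bott--Chern/Aeppli class pairing with $c_1^{\BC}$. By Lamari's duality \cite{Lamari}, on a compact K\"ahler manifold such a class lies in the closure of the movable cone, that is, the cone dual to pseudoeffective $(1,1)$-classes. Pairing then gives $\deg_\omega(\cF)\leq0$ for all $\cF$, hence $\mu_{\omega,\max}(T_X)\leq0$.

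The main obstacle is making this duality step precise. Cao--P\u{a}un is stated for movable classes in $H^{n-1,n-1}$, while $\omega^{n-1}$ is only $\partial\bar\partial$-closed. On a K\"ahler manifold, $c_1(\cF)$ has a $d$-closed representative, and the $\partial\bar\partial$-lemma shows that $\deg_\omega$ depends only on the image of $[\omega^{n-1}]$ in the dual of $H^{1,1}$. Lamari's theorem identifies the classes positive on all pseudoeffective $(1,1)$-classes. We would therefore first verify that the Cao--P\u{a}un inequality holds for every class in the dual of the pseudoeffective cone, which is what their Corollary 5.3 provides, possibly after a limit argument. Then we apply it to $[\omega^{n-1}]$.

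\emph{Nonpositive maximal slope implies not uniruled.} Conversely, suppose $X$ is uniruled. Then $K_X$ is not pseudoeffective, for instance by Ou's theorem or classically via a free rational curve. Since $K_X=\det\Omega_X^1$, it suffices to find a Gauduchon metric $\omega$ with $\deg_\omega(K_X)<0$. Indeed, then
\[
\mu_{\omega,\max}(T_X)\geq\mu_\omega(T_X)=-\deg_\omega(K_X)/n>0.
\]
By Lamari's criterion \cite{Lamari}, a $(1,1)$-class is pseudoeffective if and only if it pairs nonnegatively with every Gauduchon form $\omega^{n-1}$. Since $c_1(K_X)$ is not pseudoeffective, some Gauduchon $\omega$ satisfies
\[
\int_X c_1(K_X)\wedge\omega^{n-1}<0,
\]
which gives the required $\omega$. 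This direction is routine once Lamari's criterion is invoked.
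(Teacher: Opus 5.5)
The converse direction is fine. It is the contrapositive of the paper's argument: Lamari's criterion, with Michelsohn's $(n-1)$-st root, gives a Gauduchon $\omega$ with $\deg_\omega K_X<0$, and only the easy implication ``uniruled $\Rightarrow$ $K_X$ not pseudoeffective'' is needed.

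The gap is in the forward direction, at the step you flagged yourself. Lamari's criterion identifies the dual of the pseudoeffective cone with the closure of the cone of classes of strictly positive $\partial\bar\partial$-closed $(n-1,n-1)$-forms, i.e.\ the Gauduchon classes $[\omega^{n-1}]$. It says nothing about movable classes. The claim that this dual cone equals the closed movable cone (generated by push-forwards $\mu_*(\tilde\omega_1\wedge\cdots\wedge\tilde\omega_{n-1})$ under modifications) is the transcendental Boucksom--Demailly--P\u{a}un--Peternell duality. That duality is known on projective manifolds but is open for general compact K\"ahler manifolds. So ``$[\omega^{n-1}]$ lies in the closure of the movable cone'' is not available. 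An inequality $c_1(\cQ)\cdot\alpha\geq0$ known only for movable $\alpha$ therefore cannot be transferred to $\alpha=[\omega^{n-1}]$, and ``possibly after a limit argument'' does not bridge this.

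The way around it, which is what the paper does, is to use the stronger conclusion of Cao--P\u{a}un (Corollary 5.2): if $K_X$ is pseudoeffective, then $\det\cE$ is pseudoeffective for every torsion-free quotient $\Omega_X^1\to\cE$. For a saturated $\cF\subset T_X$, take $\cE$ to be the \emph{image} of the restriction map $\Omega_X^1\to\cF^*$. This map need not be surjective, a point your ``the quotient $\Omega_X^1\to\cF^*$'' glosses over. It is surjective off a set of codimension at least two, so $\det\cE\simeq(\det\cF)^*$. A closed positive current $T\in c_1^{\BC}(\det\cE)$ then gives
\[
-\deg_\omega(\cF)=\deg_\omega(\det\cE)=\int_XT\wedge\frac{\omega^{n-1}}{(n-1)!}\geq0
\]
for every Gauduchon $\omega$. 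The pairing is independent of the representative because $\partial\bar\partial\omega^{n-1}=0$. This yields $\mu_{\omega,\max}(T_X)\leq0$ directly, with no comparison between the movable and Gauduchon cones.
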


\begin{proof}
Suppose first that $X$ is not uniruled. By \cite[Theorem 1.1]{Ou2025}, $K_X$ is pseudoeffective. Let $\cF\subset T_X$ be a nonzero saturated subsheaf. The image $\cE$ of the restriction morphism $\Omega_X^1\longrightarrow\cF^*$ is a torsion-free quotient of $\Omega_X^1$. Outside an analytic subset of codimension at least two, $\cF$ and $T_X/\cF$ are locally free, and this morphism is surjective. Consequently
\begin{equation}
\det\cE\simeq(\det\cF)^*.
\end{equation}
By \cite[Corollary 5.2]{CaoPaun2025}, $\det\cE$ is pseudoeffective. It follows that $\deg_\omega(\cF)\leq0$ for every Gauduchon metric $\omega$. Taking the maximal slope yields the claimed inequality.

Conversely, the inequality for the whole tangent bundle gives
\begin{equation}
 \deg_\omega K_X=-\deg_\omega T_X\geq0 \qquad\text{for every Gauduchon }\omega.
\end{equation}
Lamari's criterion \cite[Lemme 3.3]{Lamari}, together with Michelsohn's root construction \cite{Michelsohn}, implies that $K_X$ is pseudoeffective. Ou's theorem then shows that $X$ is not uniruled.
\end{proof}

\begin{lemma}\label{lem:variation}
Let $X$ be a compact K\"ahler manifold and suppose that $\mu_{\omega,\max}(T_X)\leq0$ for every Gauduchon $\omega$. If a saturated subsheaf $\cF\subset T_X$ has
$\deg_{\omega_0}(\cF)=0$ for one Gauduchon metric $\omega_0$, then
\begin{equation}
c_1^{\BC}(\cF)=0, \qquad \deg_\omega(\cF)=0 \quad\text{for every Gauduchon }\omega.
\end{equation}
Moreover, $\cF$ is $\omega$-semistable of slope zero for every such metric.
\end{lemma}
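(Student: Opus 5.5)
We will show $c_1^{\BC}(\cF)=0$ by a positivity argument on $\det\cF^*$, as in the proof of Lemma \ref{lem:generic}, followed by a duality argument on the Gauduchon cone. The two remaining assertions then follow at once.

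\emph{Step 1: $\det\cF^*$ is pseudoeffective.} By Lemma \ref{lem:generic}, the hypothesis $\mu_{\omega,\max}(T_X)\leq0$ for all Gauduchon $\omega$ means that $X$ is not uniruled. As in the proof of that lemma, the image $\cE$ of $\Omega_X^1\to\cF^*$ is a torsion-free quotient of $\Omega^1_X$ with $\det\cE\simeq(\det\cF)^*$. By \cite[Corollary 5.2]{CaoPaun2025}, $(\det\cF)^*$ is pseudoeffective, so its Bott-Chern class contains a closed positive $(1,1)$-current $T$.

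\emph{Step 2: $T=0$.} Since $\omega_0$ is Gauduchon, $\deg_{\omega_0}(\cF)$ depends only on $c_1^{\BC}(\det\cF)$. Hence
\begin{equation*}
0=-\deg_{\omega_0}(\cF)=\int_X T\wedge\frac{\omega_0^{n-1}}{(n-1)!}.
\end{equation*}
Here we pair a $dd^c$-closed form with the $\partial\bar\partial$-closed form $\omega_0^{n-1}$, so the pairing is well defined on Bott-Chern classes. As in the proof of Lemma \ref{lem:zero-mass}, a positive current with zero mass against the strictly positive form $\omega_0^{n-1}$ vanishes. Thus $T=0$, and $c_1^{\BC}(\det\cF)=c_1^{\BC}(\cF)=0$.

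We expect Steps 1 and 2 to be the main obstacle, for the following reason. One could instead use only the inequalities $\deg_\omega(\cF)\leq0$ for every $\omega$ together with equality at the interior point $\omega_0$ of the open cone of classes $[\omega^{n-1}]$. Since $\deg_\omega$ is linear in $[\omega^{n-1}]$, a linear functional that is nonpositive on an open convex cone and vanishes at an interior point must vanish on the whole cone. That argument gives $\deg_\omega(\cF)=0$ for all $\omega$. However, it needs the cone of Gauduchon classes to be open in the relevant space, and it does not by itself give the Bott-Chern statement. The pseudoeffectivity route avoids this and yields the stronger conclusion directly.

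\emph{Step 3: consequences.} Once $c_1^{\BC}(\cF)=0$, we have $\deg_\omega(\cF)=\int c_1(\det\cF,\hat H)\wedge\omega^{n-1}/(n-1)!$. Since $c_1(\det\cF,\hat H)$ is $\partial\bar\partial$-exact and $\omega$ is Gauduchon, this integral vanishes for every Gauduchon $\omega$. For semistability, any subsheaf $\cG\subset\cF$ is also a subsheaf of $T_X$, so $\mu_\omega(\cG)\leq\mu_{\omega,\max}(T_X)\leq0=\mu_\omega(\cF)$. Hence $\cF$ is $\omega$-semistable of slope zero for every Gauduchon $\omega$.
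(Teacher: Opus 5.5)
Your proof is correct. Steps 2 and 3 coincide with the paper's argument: zero mass of a positive current against $\omega_0^{n-1}$, then comparing slopes of subsheaves of $\cF$ with $\mu_{\omega,\max}(T_X)$.

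The difference is how you get pseudoeffectivity of $L=(\det\cF)^*$. The paper applies Lamari's criterion directly to $L$. The hypothesis gives $\deg_\omega L=-\deg_\omega\cF\geq-\rk(\cF)\,\mu_{\omega,\max}(T_X)\geq0$ for every Gauduchon $\omega$, which is exactly Lamari's input. You instead use Lamari only for $K_X$ (inside Lemma \ref{lem:generic}), pass through non-uniruledness via Ou, and then invoke the Cao--P\u{a}un positivity of torsion-free quotients of $\Omega^1_X$.

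This is legitimate, since the paper relies on the same results. But it routes a soft duality statement through two deep theorems without gaining generality: by Lemma \ref{lem:generic}, the hypothesis is already equivalent to non-uniruledness. The paper's argument uses nothing specific to the tangent bundle. It works verbatim for any holomorphic bundle $E$ with $\mu_{\omega,\max}(E)\leq0$ for all Gauduchon $\omega$.

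Your aside also undersells the linear-functional argument. By Michelsohn's root construction, the classes $[\omega^{n-1}]$ of Gauduchon metrics are exactly the Aeppli classes of positive $\partial\bar\partial$-closed $(n-1,n-1)$-forms. These form an open cone in $H^{n-1,n-1}_A(X,\mathbb R)$. Hence your functional vanishes on the whole Aeppli group, and nondegeneracy of the Bott--Chern/Aeppli pairing then does give $c_1^{\BC}(\cF)=0$. Lamari's criterion is essentially the Hahn--Banach form of this same duality.
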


\begin{proof}
Keep the subsheaf $\cF$ fixed and set $L=(\det\cF)^*$. For every Gauduchon metric $\omega$, one has
\begin{equation}
\deg_\omega L=-\deg_\omega\cF \geq-\rk(\cF)\mu_{\omega,\max}(T_X)\geq0.
\end{equation}
Lamari's criterion shows that $L$ is pseudoeffective. By the same argument as in Lemma \ref{lem:zero-mass}, we deduce $c_1^{\BC}(\cF)=-c_1^{\BC}(L)=0$. The degree of $\cF$ therefore vanishes for every Gauduchon metric.
Every nonzero subsheaf of $\cF$ is also a subsheaf of $T_X$, so its slope is nonpositive. This proves semistability.
\end{proof}

We use the following analytic part of the splitting theorem of Loray-Pereira-Touzet.

\begin{lemma}[Loray-Pereira-Touzet]\label{lem:LPT}
Let $X$ be a compact K\"ahler manifold with $K_X$ pseudoeffective. If $\cF\subset T_X$ is a nonzero saturated subsheaf with $c_1(\cF)=0$ in $H^2(X,\mathbb R)$, then $\cF$ is an integrable holomorphic subbundle. There is an integrable holomorphic subbundle $\cG$ such that
\begin{equation}
 T_X=\cF\oplus\cG.
\end{equation}
When $\rk\cF=\dim X$, this means $\cF=T_X$ and $\cG=0$.
\end{lemma}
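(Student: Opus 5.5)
The plan is to deduce the statement directly from \cite[Theorem 5.2]{LPT2018}, so the work is mainly checking that its hypotheses hold in our situation. First, since $K_X$ is pseudoeffective, Ou's theorem \cite[Theorem 1.1]{Ou2025} shows that $X$ is not uniruled. Lemma \ref{lem:generic} then gives $\mu_{\omega,\max}(T_X)\leq 0$ for every Gauduchon metric $\omega$. Since $c_1(\cF)=0$ in $H^2(X,\mathbb R)$, we have $\deg_\omega(\cF)=0$ for every Gauduchon $\omega$. Lemma \ref{lem:variation} therefore applies: $c_1^{\BC}(\cF)=0$, and $\cF$ is $\omega$-semistable of slope zero for every Gauduchon $\omega$.

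Next I would establish integrability, which Theorem 5.2 of \cite{LPT2018} needs because it is formulated for foliations. I would use the O'Neill tensor
\[
\Lambda^2\cF\longrightarrow T_X/\cF,\qquad v\wedge w\longmapsto[v,w]\bmod\cF,
\]
which is $\cO_X$-linear. The sheaf $\Lambda^2\cF$ is semistable of slope zero, since tensor operations preserve semistability in the Gauduchon setting via approximate Hermitian-Einstein metrics \cite{Jacob,NZ2018}. Hence the image $\cA$ of this map, if nonzero, has $\mu_\omega(\cA)\geq0$. Let $\cF'\subset T_X$ be the preimage of $\cA$. Then
\[
\deg_\omega(\cF')=\deg_\omega(\cF)+\deg_\omega(\cA)\geq 0,
\]
so $\deg_\omega(\cF')=0$ for every $\omega$ by $\mu_{\omega,\max}(T_X)\le 0$. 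Applying Lemma \ref{lem:variation} again gives $c_1^{\BC}(\cF')=0$.

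This alone does not force $\cA=0$, and I expect this to be the main obstacle. My first attempt would be to iterate: replace $\cF$ by the saturation of the distribution it generates under brackets. This terminates in an integrable saturated subsheaf $\widetilde\cF\supset\cF$ with $c_1(\widetilde\cF)=0$. I would then apply Loray--Pereira--Touzet to $\widetilde\cF$ to get a regular foliation with a complementary foliation. Inside the resulting flat, locally split structure (where $\widetilde\cF$ carries a flat metric, as in \cite{LPT2018}), I would recover that $\cF$ itself is a parallel, hence involutive, subbundle.

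With $\cF$ a foliation with $c_1(\cF)=0$ on a compact K\"ahler manifold with $K_X$ pseudoeffective, \cite[Theorem 5.2]{LPT2018} gives that $\cF$ is regular, i.e.\ a holomorphic subbundle, and that there is an integrable holomorphic subbundle $\cG$ with $T_X=\cF\oplus\cG$. Regularity can also be seen from Corollary \ref{cor:splitting}: with $E=T_X$ and $a=0$, any metric satisfying \eqref{eq:integral-condition} makes the degree-zero saturated $\cF$ a direct factor. Finally, if $\rk\cF=\dim X$, then $\cF$ is a saturated subsheaf of full rank in the torsion-free sheaf $T_X$, so $\cF=T_X$ and $\cG=0$.
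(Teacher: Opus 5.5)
The citation of \cite[Theorem 5.2]{LPT2018} and the full-rank argument in your last paragraph are exactly the paper's proof. The paper applies that theorem directly to the saturated subsheaf $\cF$ when $\rk\cF<\dim X$, with no separate integrability step, and handles full rank by saturation.

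The gap is in the integrability argument you insert before that citation. The O'Neill-tensor iteration is plausible as far as it goes: it yields an integrable saturated $\widetilde\cF\supseteq\cF$ with $c_1(\widetilde\cF)=0$. It cannot give $[\cF,\cF]\subset\cF$, however. The O'Neill tensor of $\cF$ is then a morphism $\Lambda^2\cF\to\widetilde\cF/\cF$ between sheaves that are semistable of slope zero, and no slope argument excludes such a morphism.

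The step meant to close this fails on both counts. First, \cite{LPT2018} provides no flat metric on $\widetilde\cF$. Take $X=S\times C$ with $S$ a K3 surface and $C$ a curve of genus at least two. Then $K_X$ is pseudoeffective and $\widetilde\cF=p_1^*T_S$ is a regular foliation with $c_1=0$, yet it is not flat because $c_2(S)=24$. Second, suppose $\cF$ were parallel for the Chern connection of some metric on $\widetilde\cF$, say a Hermitian--Einstein one. That would imply involutivity only if the connection were torsion-free along the leaves, which nothing guarantees. On the Iwasawa manifold (which is not K\"ahler), the span of two left-invariant vector fields whose bracket is the third is parallel for a flat connection but is not involutive. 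So some genuinely K\"ahler input is required.

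The missing idea is Demailly's Frobenius integrability theorem. On a compact K\"ahler manifold, if $L$ is pseudoeffective and $0\neq\theta\in H^0(X,\Omega^q_X\otimes L^{-1})$, then the sheaf of vector fields $\zeta$ with $\iota_\zeta\theta=0$ is closed under brackets. Apply it as follows:
\begin{itemize}
\item Take $q=\codim\cF$ and let $\theta$ be the twisted $q$-form induced by $\Lambda^qT_X\to\det(T_X/\cF)$.
\item Then $L^{-1}=\det(T_X/\cF)\cong K_X^{-1}\otimes(\det\cF)^{-1}$, so $c_1(L)=c_1(K_X)$ because $c_1(\cF)=0$. Hence $L$ is pseudoeffective.
\item The kernel of $\theta$ equals $\cF$, because both are saturated and they agree generically.
\end{itemize}
With this, or by reading the LPT theorem as a statement about saturated subsheaves as the paper does, the O'Neill iteration is unnecessary.

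Two smaller remarks. Your opening reduction through Ou's theorem and Lemma~\ref{lem:variation} is superfluous, since $K_X$ pseudoeffective and $c_1(\cF)=0$ are already the hypotheses needed. Deriving regularity from Corollary~\ref{cor:splitting} is circular. By Theorem~\ref{thm:metric}, a metric on $T_X$ satisfying \eqref{eq:integral-condition} with $a=0$ exists only once the splitting is known, and in the paper that splitting is obtained from this very lemma.
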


For $\rk\cF<\dim X$, this is \cite[Theorem 5.2]{LPT2018}; for full rank, it follows from saturation. Only the regularity and splitting assertions are used. In particular, the extra projective hypothesis for the torsion assertion in that theorem is not needed here.

\begin{lemma}\label{lem:universal-polystability}
Under the hypotheses of Lemma \ref{lem:variation}, every nonzero saturated subsheaf $\cF\subset T_X$ with $c_1^{\BC}(\cF)=0$ is a holomorphic subbundle and is $\omega$-polystable of slope zero for every Gauduchon metric $\omega$.
\end{lemma}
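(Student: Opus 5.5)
Fix a nonzero saturated subsheaf $\cF\subset T_X$ with $c_1^{\BC}(\cF)=0$. The hypotheses of Lemma~\ref{lem:variation} say that $\mu_{\omega,\max}(T_X)\leq0$ for every Gauduchon metric. By Lemma~\ref{lem:generic}, $X$ is not uniruled, so by Ou's theorem $K_X$ is pseudoeffective. Since $c_1^{\BC}(\cF)=0$ implies $c_1(\cF)=0$ in $H^2(X,\mathbb R)$, Lemma~\ref{lem:LPT} applies. It shows that $\cF$ is an integrable holomorphic subbundle and gives a holomorphic splitting $T_X=\cF\oplus\cG$. Moreover $\deg_\omega(\cF)=0$ for every Gauduchon $\omega$, and Lemma~\ref{lem:variation} shows that $\cF$ is $\omega$-semistable of slope zero for every $\omega$. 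What remains is to upgrade semistability to polystability.

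We argue by induction on $\rk\cF$, for all such $\cF$ simultaneously, with $\omega$ fixed. If $\cF$ is $\omega$-stable we are done. Otherwise, take the first term $\cF_1\subsetneq\cF$ of an $\omega$-Jordan-H\"older filtration. It is saturated in $\cF$ and stable of slope zero. Because $\cF$ is a subbundle, hence saturated in $T_X$, $\cF_1$ is also saturated in $T_X$, and $\deg_\omega(\cF_1)=0$. Lemma~\ref{lem:variation}, applied with $\omega_0:=\omega$, gives $c_1^{\BC}(\cF_1)=0$. Lemma~\ref{lem:LPT} then makes $\cF_1$ a holomorphic subbundle with a complement $T_X=\cF_1\oplus\cG_1$.

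The main obstacle is to obtain a complement of $\cF_1$ \emph{inside} $\cF$. The splitting from Lemma~\ref{lem:LPT} lives in $T_X$ and need not be compatible with $\cF$. I would intersect: set $\cF':=\cF\cap\cG_1$. Equivalently, $\cF'$ is the kernel of the composition $\cF\hookrightarrow T_X\to\cF_1$, where the second map is the projection along $\cG_1$; this composition restricts to the identity on $\cF_1$. Hence $\cF=\cF_1\oplus\cF'$ holomorphically, and $\cF'\simeq\cF/\cF_1$ is a subbundle of $T_X$. It is saturated in $T_X$: $T_X/\cF'$ is an extension of $T_X/\cF$ by $\cF/\cF'\simeq\cF_1$, and both are locally free, so $T_X/\cF'$ is torsion-free. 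Its class is
\begin{equation*}
c_1^{\BC}(\cF')=c_1^{\BC}(\cF)-c_1^{\BC}(\cF_1)=0.
\end{equation*}
Since $\rk\cF'<\rk\cF$, the induction hypothesis shows that $\cF'$ is $\omega$-polystable of slope zero. Therefore $\cF=\cF_1\oplus\cF'$ is $\omega$-polystable of slope zero.

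Since $\omega$ was arbitrary, this proves the claim for every Gauduchon metric. As an alternative route to the complement, one could use Theorem~\ref{thm:metric}. This would require a metric on $\cF$ with nonpositive mean curvature, for instance from $\cF\subset T_X$ together with Corollary~\ref{cor:splitting}. The intersection argument above avoids this.
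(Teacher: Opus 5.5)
Your proof is correct and follows the paper's argument essentially step by step: the Loray--Pereira--Touzet splitting of $\cF$, then induction on the rank via the first Jordan--H\"older term, Lemma~\ref{lem:variation} to get $c_1^{\BC}=0$ for that term, and restriction of its LPT retraction to $\cF$ to produce the complement $\cF'$. (The paper obtains pseudoeffectivity of $K_X$ directly from Lamari's criterion rather than through Lemma~\ref{lem:generic} and Ou's theorem, an inessential difference.) Your closing alternative via Theorem~\ref{thm:metric} is best dropped: the only metric on $T_X$ with nonpositive mean curvature available here comes from Theorem~\ref{thm2}, whose proof uses this lemma.
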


\begin{proof}
The hypothesis for the whole tangent bundle and Lamari's criterion give pseudoeffectivity of $K_X$, as in Lemma \ref{lem:generic}. The natural map from Bott-Chern to real de Rham cohomology sends $c_1^{\BC}(\cF)$ to $c_1(\cF)$. Thus Lemma \ref{lem:LPT} makes $\cF$ a holomorphic direct summand of $T_X$.

Fix a Gauduchon metric $\omega$. It is easy to see that the bundle $\cF$ is $\omega$-semistable of slope zero. We prove its polystability by induction on its rank. If $\cF$ is stable, there is nothing to prove. Otherwise, let $\cA\subset\cF$ be the first nonzero stable term in a Jordan-H\"older filtration. It is saturated in $\cF$ and has slope zero. Since $\cF$ is a direct factor of $T_X$, the sheaf $T_X/\cA$ is torsion-free. Hence $\cA$ is also saturated in $T_X$. Lemma \ref{lem:variation} means $c_1^{\BC}(\cA)=0$, and from Lemma \ref{lem:LPT} there is a holomorphic retraction $p:T_X\longrightarrow\cA$.

Because $\cA\subset\cF$, the restriction of $p$ to $\cF$ is still a retraction. Therefore
\begin{equation}
\cF=\cA\oplus\cF', \qquad \cF':=\ker(p|_{\cF}).
\end{equation}
The bundle $\cF'$ is saturated in $T_X$ and has $c_1^{\BC}(\cF')=0$. Its rank is smaller than that of $\cF$. By induction, it is $\omega$-polystable of slope zero. Since $\cA$ is stable of slope zero, $\cF$ is polystable. The metric $\omega$ was arbitrary, which proves the assertion.
\end{proof}

\begin{proof}[Proof of Theorem \ref{thm2}]
Assume first that $X$ is not uniruled and $\mu_{\omega_0,\max}(T_X)=0$. Let $\cS$ be the first nonzero Harder-Narasimhan term of $T_X$ with respect to $\omega_0$. If $T_X$ is semistable, take $\cS=T_X$. Then $\cS$ is saturated, nonzero, and has $\deg_{\omega_0}\cS=0$. From Lemmas \ref{lem:generic} and \ref{lem:variation}, one can see
\begin{equation}
c_1^{\BC}(\cS)=0.
\end{equation}
Since $K_X$ is pseudoeffective, Lemma \ref{lem:LPT} gives integrable holomorphic subbundles $\cS,\cQ$ with
$T_X=\cS\oplus\cQ$. Lemma \ref{lem:universal-polystability} tells us that $\cS$ is polystable of slope zero for every Gauduchon metric.

If $\cQ\neq0$, the definition of the first HN term means $\mu_{\omega_0,\max}(\cQ)<0$. For every Gauduchon metric $\omega$, the inclusion $\cQ\subset T_X$ implies $\mu_{\omega,\max}(\cQ)\leq0$. Suppose that equality holds for
some metric $\omega_1$. Let $\cA\subset\cQ$ be the first nonzero HN term for $\omega_1$. It is saturated in $\cQ$ and hence in $T_X$, and $\deg_{\omega_1}\cA=0$. Lemma \ref{lem:variation} yields $\deg_{\omega_0}\cA=0$, contradicting $\mu_{\omega_0,\max}(\cQ)<0$. Therefore the same complement $\cQ$ is strictly HN-negative for every Gauduchon metric. In particular, $\cS$ is the first HN term of $T_X$ for every such metric.

We now prove the final equivalence, without assuming \eqref{HNzero}. If $X$ is not uniruled, fix any Gauduchon metric $\omega$. Because of Lemma \ref{lem:generic}, $\mu_{\omega,\max}(T_X)\leq0$. If it is strictly negative, the equivalence \eqref{eq:strict-correspondence} gives a metric with negative mean curvature. If it is zero, the splitting just proved satisfies condition (\ref{(ii)}) of Theorem \ref{thm:metric} with $a=0$. That theorem gives a metric $H_\omega$ with $\Phi_{H_\omega,\omega}\leq0$. Conversely, suppose such a metric exists for every Gauduchon
$\omega$. From Lemma \ref{lem:GC}, it follows that $\mu_{\omega,\max}(T_X)\leq0$ for every $\omega$. Lemma \ref{lem:generic} then shows that $X$ is not uniruled.
\end{proof}

\begin{remark}
The subbundle $\cS$ in Theorem \ref{thm2} is independent of the Gauduchon metric. One can also choose a fixed decomposition $\cS=\bigoplus_j\cS_j$ whose nonzero summands have $c_1^{\BC}(\cS_j)=0$ and are stable for every Gauduchon metric. Indeed, first decompose $\cS$ into stable factors for $\omega_0$ using the preceding proof. Each factor is semistable of slope zero for every other metric. If it were not stable for such a metric,
a proper saturated subsheaf of slope zero would, by Lemma \ref{lem:variation}, also have degree zero for $\omega_0$,
contradicting its $\omega_0$-stability.
\end{remark}

\begin{proof}[Proof of Theorem \ref{thm:uniruled}]
Suppose that $X$ is not uniruled. Due to Lemma \ref{lem:generic}, $\mu_{\omega,\max}(T_X)\leq0$ for every Gauduchon metric. The assumption \eqref{eq:uniruled-hypotheses} therefore gives
\begin{equation}
\mu_{\omega_1,\max}(T_X)=0.
\end{equation}
According to Theorem \ref{thm2}, there exist holomorphic subbundles $\cS,\cQ\subset T_X$, with $\cS\neq 0$, such that $T_X=\cS\oplus\cQ$, where $\cS$ is the maximal destabilizing subsheaf for $\omega_1$ and  $c_1^{\BC}(\cS)=0$. Hence $c_1(\det \cS)=0$ in $H^2(X,\mathbb R)$. The decomposition $T_X=\cS\oplus\cQ$ induces a holomorphic inclusion
\begin{equation}
L:=(\det\cS)^*\hookrightarrow\Omega_X^r,
\end{equation}
where $r=\rk\cS>0$. By the assumption \eqref{eq:tensor-vanishing}, $H^1(X,\cO_X)=H^{0,1}(X)=0$, hence $\Pic^0(X)=0$. Therefore, for some $N>0$, $L^{\otimes N}\simeq\cO_X$. Taking the $N$-th tensor power of the inclusion $L\subset \Omega_X^r$ yields a nonzero holomorphic section of $(\Omega_X^r)^{\otimes N}$. Since $(\Omega_X^r)^{\otimes N} \hookrightarrow (\Omega_X^1)^{\otimes rN}$, this contradicts \eqref{eq:tensor-vanishing}. Hence $X$ is uniruled.
\end{proof}

\section{HN-nonnegativity and rational connectedness}

\begin{lemma}\label{lem:tensor-kappa}
If a compact complex manifold $X$ satisfies \eqref{eq:tensor-vanishing}, then
\begin{equation}
 \kappa^+(X)=-\infty.
\end{equation}
\end{lemma}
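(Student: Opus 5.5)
We argue by contraposition: assuming $\kappa^+(X)\neq-\infty$, we produce a nonzero holomorphic covariant tensor. So suppose there are $p\geq1$ and a nonzero coherent subsheaf $\cF\subset\Omega_X^p$ of rank $r$ with $\kappa(X,\det\cF)\geq0$. Then $H^0\bigl(X,(\det\cF)^{\otimes k}\bigr)\neq0$ for some $k\geq1$. The plan is to embed $(\det\cF)^{\otimes k}$ into some tensor power $(\Omega_X^1)^{\otimes m}$. A nonzero section of $(\det\cF)^{\otimes k}$ then gives a nonzero section of $(\Omega_X^1)^{\otimes m}$, contradicting \eqref{eq:tensor-vanishing}.

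The first step is to construct the embedding. The inclusion $\cF\subset\Omega_X^p$ induces a morphism $\Lambda^r\cF\to\Lambda^r\Omega_X^p$. It is injective modulo torsion, since it is injective at the generic point where $\cF$ is a subbundle. Taking double duals, and using that $\Lambda^r\Omega_X^p$ is locally free and hence reflexive, we get an injective morphism
\begin{equation}
\det\cF=(\Lambda^r\cF)^{**}\hookrightarrow\Lambda^r\Omega_X^p .
\end{equation}
The reason is that a morphism from a torsion-free sheaf to a reflexive sheaf extends uniquely to the reflexive hull, and the extension stays injective because it is generically injective and $\det\cF$ is torsion-free.

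Next, $\Lambda^r\Omega_X^p$ is a direct summand of $(\Omega_X^p)^{\otimes r}$ via antisymmetrization, since we are in characteristic zero. Likewise $\Omega_X^p$ is a direct summand of $(\Omega_X^1)^{\otimes p}$. Hence $\det\cF\hookrightarrow(\Omega_X^1)^{\otimes pr}$. Taking $k$-th tensor powers, and using that tensor products of injections of vector bundles, or of an invertible sheaf into a bundle, remain injective (locally they are tensor products of injections of free modules over a domain), we obtain
\begin{equation}
(\det\cF)^{\otimes k}\hookrightarrow(\Omega_X^1)^{\otimes prk}.
\end{equation}
The nonzero section then maps to a nonzero section, because the map is injective on sheaves.

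The only point that needs care is the passage from $\Lambda^r\cF$ to its double dual, that is, checking that the map from $\det\cF$ into the locally free sheaf is well defined and injective. The generic-injectivity and reflexivity argument above handles this, and the rest is routine.
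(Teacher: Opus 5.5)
Your proposal is correct and follows the paper's proof: you embed $\det\cF$ into $\Lambda^r\Omega_X^p\subset(\Omega_X^1)^{\otimes pr}$ by taking the $r$-th exterior power and the double dual, then tensor $k$ times to push a nonzero section of $(\det\cF)^{\otimes k}$ into $(\Omega_X^1)^{\otimes prk}$. Your added justification of injectivity (discarding torsion, using reflexivity and generic injectivity, and the flatness of tensor products of injections) fills in details that the paper leaves implicit.
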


\begin{proof}
Suppose that $0\neq\cF\subset\Omega_X^p$ has rank $r$ and $\kappa(X,\det\cF)\geq0$. Taking the \(r\)-th exterior power of the inclusion \(F\hookrightarrow\Omega_X^p\) and then the double dual yields an inclusion of sheaves
\begin{equation}
\det\cF\hookrightarrow\Lambda^r\Omega_X^p \hookrightarrow(\Omega_X^1)^{\otimes pr}.
\end{equation}
Since $\kappa(X,\det\cF)\geq0$, for some $N>0$ there is a nonzero section of $(\det\cF)^{\otimes N}$. Tensoring the displayed inclusion $N$ times gives a nonzero section of $(\Omega_X^1)^{\otimes prN}$, contradicting \eqref{eq:tensor-vanishing}.
\end{proof}

\begin{lemma}\label{lem:kappa-projective}
Let $X$ be a compact K\"ahler manifold with $\kappa^+(X)=-\infty$. Then
\begin{equation}
 H^0(X,\Omega_X^p)=0\qquad\text{for every }p\geq1.
\end{equation}
In particular, $X$ is projective and $\Pic^0(X)=0$.
\end{lemma}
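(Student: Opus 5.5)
Suppose $H^0(X,\Omega_X^p)\neq0$ for some $p\geq1$, and let $0\neq s\in H^0(X,\Omega_X^p)$. The section $s$ gives a morphism $\cO_X\to\Omega_X^p$. Since $X$ is connected and $s\neq0$, this morphism is injective, because $\cO_X$ is torsion-free and $s$ does not vanish identically. So $\cF:=s\cdot\cO_X\simeq\cO_X$ is a nonzero coherent subsheaf of $\Omega_X^p$ of rank one, with $\det\cF\simeq\cO_X$. Then $\kappa(X,\det\cF)=\kappa(X,\cO_X)=0$, since $h^0(X,\cO_X^{\otimes m})=1$ for all $m$. By Definition~\ref{def:kappa-plus}, this gives $\kappa^+(X)\geq0$, a contradiction. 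Hence $H^0(X,\Omega_X^p)=0$ for every $p\geq1$. The main point to check is that the definition allows non-saturated subsheaves. It does: the definition takes any nonzero coherent subsheaf, so no saturation step is needed.

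For projectivity, use the vanishing for $p=2$. On a compact K\"ahler manifold, Hodge symmetry gives $H^{2,0}(X)\cong\overline{H^{0,2}(X)}$, so $H^2(X,\cO_X)=H^{0,2}(X)=0$. Then $H^2(X,\mathbb C)=H^{1,1}(X)$, so $H^{1,1}(X)\cap H^2(X,\mathbb Q)$ spans $H^{1,1}(X,\mathbb R)$. The K\"ahler cone is a nonempty open subset of $H^{1,1}(X,\mathbb R)$, so it contains a rational class. Scaling gives an integral K\"ahler class. By Kodaira's embedding theorem, $X$ is projective. (This is the standard Kodaira criterion: $h^{2,0}=0$ implies projectivity for compact K\"ahler manifolds \cite{Voisin}.) This step needs $n\geq2$ so that $p=2$ makes sense. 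If $n=1$, every compact Riemann surface is projective anyway.

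For $\Pic^0(X)=0$, use the vanishing for $p=1$. Hodge symmetry gives $H^1(X,\cO_X)=H^{0,1}(X)\cong\overline{H^{1,0}(X)}=\overline{H^0(X,\Omega_X^1)}=0$. The exponential sequence then identifies $\Pic^0(X)$ with $H^1(X,\cO_X)/H^1(X,\mathbb Z)=0$, exactly as in the proof of Corollary~\ref{cor:torsion}. I expect no real obstacle. The only step needing care is recording that the Kodaira projectivity criterion applies on K\"ahler manifolds with $H^{2,0}=0$.
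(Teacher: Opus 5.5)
Your proposal is correct and follows the paper's proof. A nonzero section of $\Omega_X^p$ gives a rank-one subsheaf with $\kappa\geq0$; then $H^{2,0}=0$, openness of the K\"ahler cone and Kodaira embedding give projectivity, and Hodge symmetry gives $\Pic^0(X)=0$. The only difference is that you use $s\cdot\cO_X\simeq\cO_X$ directly, which is legitimate since Definition~\ref{def:kappa-plus} allows arbitrary coherent subsheaves. The paper instead passes to the saturation, whose determinant is effective; either way one gets $\kappa\geq0$.
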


\begin{proof}
If $0\neq s\in H^0(X,\Omega_X^p)$, the image of $\cO_X\xrightarrow{s}\Omega_X^p$ is a rank-one coherent subsheaf. After saturation, its determinant is an effective line bundle, hence has nonnegative Kodaira dimension. This contradicts $\kappa^+(X)=-\infty$.

In particular, $H^{1,0}(X)=H^{2,0}(X)=0$. The latter equality and Hodge decomposition imply $H^2(X,\mathbb R)=H^{1,1}(X,\mathbb R)$. Since the K\"ahler cone is open, it contains a rational class; after multiplication by a positive integer this gives an integral K\"ahler class. Kodaira's embedding theorem therefore shows that $X$ is projective \cite{Voisin}. Finally, Hodge symmetry yields $H^1(X,\cO_X)=H^{0,1}(X)=0$, hence $\Pic^0(X)=0$.
\end{proof}

\begin{lemma}\label{lem:psef-torsion}
Let $X$ be a compact K\"ahler manifold and suppose that, for some Gauduchon metric $\omega_0$,
\begin{equation}
 \mu_{\omega_0,\min}(T_X)\geq0.
\end{equation}
Assume moreover that $H^0(X,\Omega_X^1)=0$. If, for some $p\geq1$, there exists a pseudoeffective invertible subsheaf $L\subset\Omega_X^p$, then, after saturation, $L$ is torsion. In particular, $\kappa(X,L)=0$.
\end{lemma}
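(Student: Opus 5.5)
The plan is to reduce the statement to Corollary \ref{cor:torsion} with $E=T_X$, so that $E^*=\Omega_X^1$. First we replace $L$ by its saturation $\bar L$ in $\Omega_X^p$. Since $\bar L\supset L$ and both are invertible of rank one, $\bar L\simeq L\otimes\cO_X(D)$ for an effective divisor $D$. Hence $\bar L$ is still pseudoeffective, because the sum of a closed positive current and the current of integration over $D$ remains positive.

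Next we embed $\Omega_X^p$ into $(\Omega_X^1)^{\otimes p}$ as a holomorphic subbundle. Antisymmetrization is split by the alternation projection, so this is a direct summand. Thus $\bar L\subset(E^*)^{\otimes p}$ with $\mu_{\omega_0,\min}(E)\geq0$.

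With these choices, the hypotheses of Corollary \ref{cor:torsion} hold. It gives $c_1^{\BC}(\bar L)=0$, via Lemma \ref{lem:zero-mass} applied in the subbundle form of Lemma \ref{lem:dual-degree}. It also gives $N>0$ with $\bar L^{\otimes N}\simeq\cO_X$. The inputs it uses are $H^0(X,\Omega^1_X)=0$, hence $H^{0,1}=0$ by Hodge symmetry and $\Pic^0(X)=0$. So $\bar L$ is torsion.

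For the Kodaira dimension, a torsion line bundle has $h^0(\bar L^{\otimes kN})=1$ for all $k$. Moreover, $h^0(\bar L^{\otimes m})\le1$ for every $m$: if $\bar L^{\otimes m}$ had more than one independent section, or any section with nonempty zero set, then raising to the power $N$ would give a nontrivial effective divisor in the class of $\cO_X$, which is impossible on a compact manifold. Hence $\kappa(X,\bar L)=0$. Since pseudoeffectivity of $L$ together with $L\subset\bar L$ and $\bar L$ torsion forces $D$ to have zero degree, we get $D=0$, so $L=\bar L$ was already saturated.

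The main obstacle is the pseudoeffectivity of the saturation together with the subbundle embedding. The degree argument in Lemma \ref{lem:dual-degree} already handles subsheaves of subbundles via Lemma \ref{lem:GC}, so everything else is formal.
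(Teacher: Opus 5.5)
Your proposal is correct and follows the paper's proof: saturate, view $\bar L\subset\Omega_X^p\hookrightarrow(\Omega_X^1)^{\otimes p}$, apply Lemma~\ref{lem:zero-mass} with $E=T_X$ and then Corollary~\ref{cor:torsion} to get torsion, and conclude that the Kodaira dimension is zero. Your extra details are valid and slightly sharpen the statement. These are the pseudoeffectivity of $\bar L=L\otimes\cO_X(D)$ and the degree argument forcing $D=0$. They show $L$ was already saturated, so $\kappa(X,L)=0$ holds for the original $L$ as well.
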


\begin{proof}
After saturation, $L$ remains pseudoeffective and the inclusion extends to
\begin{equation}
L\hookrightarrow\Omega_X^p\hookrightarrow(\Omega_X^1)^{\otimes p}.
\end{equation}
Apply Lemma \ref{lem:zero-mass} with $E=T_X$ to obtain $c_1^{\BC}(L)=0$. Corollary \ref{cor:torsion} then shows that $L$ is torsion. A torsion line bundle has Kodaira dimension zero.
\end{proof}

\begin{proof}[Proof of Theorem \ref{thm:mumford}]
$(\ref{(1)})\Rightarrow(\ref{(2)})$ is standard for rationally connected projective manifolds \cite{CDP2015}. The implication $(\ref{(2)})\Rightarrow(\ref{(3)})$ is Lemma \ref{lem:tensor-kappa}. It remains to prove $(\ref{(3)})\Rightarrow(\ref{(1)})$. Assume $\kappa^+(X)=-\infty$. From Lemma \ref{lem:kappa-projective}, we know that $X$ is projective and $\Pic^0(X)=0$. Suppose, for contradiction, that $X$ is not rationally connected. According to the Campana-Demailly-Peternell criterion \cite[Criterion 1.1]{CDP2015}, there exist $p\geq1$ and a pseudoeffective invertible subsheaf $L\subset\Omega_X^p$. By Lemma \ref{lem:psef-torsion}, after saturation $L$ is torsion. Hence $\kappa(X,L)=0$. Since $L\subset\Omega_X^p$, Definition \ref{def:kappa-plus} gives $\kappa^+(X)\geq\kappa(X,L)=0$, a contradiction. Therefore $X$ is rationally connected.
\end{proof}

\end{document}